\newtheorem{lem}{Lemma}[section]
\newtheorem{prop}[lem]{Proposition}
\newtheorem{rem}[lem]{Remark}
\newtheorem{defi}[lem]{Definition}
\newtheorem{theo}[lem]{Theorem}
\renewcommand{\P}{\mathbb{P}}
\newcommand{\N}{\mathbb{N}}
\newcommand{\E}{\mathbb{E}}
\newcommand{\Z}{\mathbb{Z}}
\newcommand{\R}{\mathbb{R}}
\title{\textsc{\Huge{Brochette First-Passage Percolation}}}
\author{Maxime Marivain}
\affil{CMAP, \'Ecole Polytechnique, CNRS, Route de
Saclay, 91128 Palaiseau Cedex, France\\\emph{maxime.marivain@polytechnique.edu}}
\date{\today}
\begin{document}
\maketitle

\begin{abstract}
    We investigate a novel first-passage percolation model, referred to as the Brochette first-passage percolation model, where the passage times associated with edges lying on the same line are equal. First, we establish a point-to-point convergence theorem, identifying the time constant. In particular, we explore the case where the time constant vanishes and demonstrate the existence of a wide range of possible behaviours. Next, we prove a shape theorem, showing that the limiting shape is the $L^1$ diamond. Finally, we extend the analysis by proving a point-to-point convergence theorem in the setting where passage times are allowed to be infinite.
\end{abstract}
\emph{Keywords :} First-passage percolation, time constant, shape theorem.\\
\emph{AMS classification :} 60K35, 60C05, 60Fxx.

\section{Introduction}
\subsection{First-passage percolation in the independent setting}
 First-passage percolation is a probabilistic model introduced in 1965 by Hammersley and Welsh in \cite{Hammersley} to study the flow of fluids through porous media. We refer to \cite{Survey} for a survey on the subject.
 In the classical model, we take the set of vertices to be $\mathbb{Z}^{d}$. We consider the set of edges $\mathbb{E}^{d}$ with endpoints $x, y\in\mathbb{Z}^{d}$ such that $\lVert x-y\rVert_1=1$. For each edge $e\in\mathbb{E}^{d}$, we associate a non-negative random variable $\tau_{e}$ with common cumulative distribution function $F$, called the passage time of edge $e$. A path is a finite or infinite sequence of edges in $\mathbb{E}^{d}$ such that consecutive edges share a unique endpoint. 
 For such a path $\Gamma$, we define:
$$
T(\Gamma)=\sum_{e\in\Gamma}\tau_{e},
$$
which we call the passage time of $\Gamma$. To define the passage time between $x,y\in\mathbb{R}^{d}$, we first note $x'$ as the unique vertex in $\mathbb{Z}^{d}$ such that $x\in x'+[0;1)^{d}$. Now, if $x,y\in\mathbb{R}^{d}$, we define the passage time between $x$ and $y$ as:
\begin{equation}\label{temps}
T(x,y)=\inf_{\Gamma} T(\Gamma),
\end{equation}
where this infimum is taken over the set of paths $\Gamma$ with endpoints $x'$ and $y'$. 
We focus on the random balls for this pseudo-distance, fundamental objects of our study. For $t\geq 0$ let
\begin{equation}\label{boule}
 B_{t}=\{ x\in\mathbb{R}^{d}:T(0,x)\leq t\}
\end{equation}
 be the set of points reachable from the origin in a time less than $t$. This model has been extensively studied since its introduction in 1965 in the case where $(\tau_{e})_{e\in\E^{d}}$ are chosen to be independent and identically distributed. 
We now recall the relevant results in the classical case within the context of this article. A first result of this model is that $T(0,nx)$ grows linearly. More precisely  we have:
\begin{theo}[\cite{kingmanpap}, see Theorem 2.1 \cite{Survey}]\label{th:papclass}
Assume that $(\tau_{e})_{e\in\E^{d}}$ are i.i.d. random variables such that $\E[\min(\tau_{1},...,\tau_{2d})]<+\infty$, then for all $x\in\Z^d$ there exists a constant $\mu(x)\geq0$ (called the time constant) such that:
$$
\lim_{n\to+\infty}\frac{T(0,nx)}{n} =\mu(x)\; a.s. \text{ and in }L^{1}. 
$$
\end{theo}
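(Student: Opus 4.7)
The plan is to recognize the sequence $(T(0,nx))_{n\geq 0}$ as satisfying the hypotheses of Kingman's subadditive ergodic theorem, and deduce the almost sure and $L^1$ convergence from there. To this end, for integers $0\leq m\leq n$, I would set $X_{m,n}=T(mx,nx)$ and check the three standard hypotheses.

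First, subadditivity: by concatenating an optimal path from $0$ to $mx$ with an optimal path from $mx$ to $(m+n)x$, the definition of $T$ as an infimum yields
\begin{equation*}
X_{0,m+n}=T(0,(m+n)x)\leq T(0,mx)+T(mx,(m+n)x)=X_{0,m}+X_{m,m+n}.
\end{equation*}
Second, distributional stationarity and ergodicity: the shift by $x$ on $\Z^d$ acts on the i.i.d. family $(\tau_e)_{e\in\E^d}$ in a measure-preserving and ergodic fashion, so that the process $(X_{m,m+n})_{m\geq 0}$ is stationary and ergodic, with $(X_{km,k(m+n)})_{k\geq 0}$ an i.i.d. sequence for each fixed $m,n$. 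Third, integrability: one must show that $\E[T(0,x)]<+\infty$, which then by subadditivity gives $\E[T(0,nx)]\leq n\,\E[T(0,x)]<+\infty$ and guarantees that Kingman's theorem applies.

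The main obstacle is exactly this integrability step, because the moment assumption $\E[\min(\tau_1,\dots,\tau_{2d})]<+\infty$ is strictly weaker than $\E[\tau_1]<+\infty$; in particular, one cannot simply bound $T(0,x)$ by the sum of the passage times along a deterministic lattice path from $0$ to $x$. The classical workaround (going back to Hammersley--Welsh and refined by Cox--Durrett) is to construct random paths that at each step traverse a small cluster whose exit time is controlled by the minimum of $2d$ incident edge passage times. Concretely, I would exhibit $C=C(d)<+\infty$ and a random path from $0$ to $x$ whose passage time is stochastically dominated by a sum of $C$ independent copies of $\min(\tau_1,\dots,\tau_{2d})$, yielding $\E[T(0,x)]\leq C\cdot\E[\min(\tau_1,\dots,\tau_{2d})]<+\infty$.

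Once integrability is in hand, Kingman's subadditive ergodic theorem produces a deterministic limit
\begin{equation*}
\mu(x):=\lim_{n\to+\infty}\frac{T(0,nx)}{n}=\inf_{n\geq 1}\frac{\E[T(0,nx)]}{n}\in[0,+\infty),
\end{equation*}
where the convergence holds almost surely and in $L^1$, and the nonnegativity $\mu(x)\geq 0$ is immediate from $T\geq 0$. The ergodicity of the underlying shift further ensures that the limit is deterministic rather than merely a random variable, concluding the argument.
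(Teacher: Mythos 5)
Your overall route is the standard one and matches the strategy the paper itself employs for its Brochette analogue (Theorem \ref{th:pap}): apply Kingman's subadditive ergodic theorem to $X_{m,n}=T(mx,nx)$, get stationarity from shift-invariance of the i.i.d.\ environment, and use ergodicity of the shift by $x$ to make the limit deterministic. One small correction: the sequences $(X_{nk,(n+1)k})_{n\geq0}$ are stationary but \emph{not} i.i.d.\ — both $T(0,kx)$ and $T(kx,2kx)$ are functions of the entire edge configuration (a cheap edge near $kx$ lowers both), so they are correlated; stationarity is all Kingman requires, so this does not hurt the argument, but the independence claim should be dropped.

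The genuine gap is in the integrability step, which you rightly identify as the only nontrivial point but then settle by an unproved and, as stated, dubious claim: that there is a random path from $0$ to $x$ whose passage time is stochastically dominated by a sum of $C(d)$ independent copies of $Y:=\min(\tau_{1},\dots,\tau_{2d})$. The greedy construction this suggests (exit each visited vertex through its cheapest incident edge) neither yields independent increments — the minima at neighbouring vertices share edges, and conditioning on earlier choices biases later ones — nor forces the path to reach $x$; moreover any such bound must depend on $\lVert x\rVert_1$, not on $d$ alone. The standard argument, which is exactly the one the paper adapts (with $d$ paths in place of $2d$) in its proof of Theorem \ref{th:pap}, is simpler: choose $2d$ edge-disjoint deterministic paths $\Gamma_{1},\dots,\Gamma_{2d}$ from $0$ to $x$, each of length at most $L=L(x,d)$. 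Their passage times are independent, and since $T(\Gamma_i)>s$ forces some edge of $\Gamma_i$ to have passage time exceeding $s/L$,
$$
\P(T(0,x)>s)\leq\prod_{i=1}^{2d}\P(T(\Gamma_i)>s)\leq\bigl(L\,\P(\tau>s/L)\bigr)^{2d}=L^{2d}\,\P\bigl(Y>s/L\bigr),
$$
so integrating in $s$ gives $\E[T(0,x)]\leq L^{2d+1}\E[Y]<+\infty$ under the hypothesis. With this replacement for your domination claim, the rest of your argument (subadditivity, $\mu(x)=\inf_{n}\E[T(0,nx)]/n\geq0$, almost sure and $L^{1}$ convergence) goes through as written.
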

However, in this model, no non-trivial distribution is known for which the time constant $\mu$ can be computed. In 1981, Cox and Durrett improved this result with their shape theorem:
\begin{theo}[\cite{Cox}]
    Assume now that $\E[\min(\tau_{1}^{d},...,\tau_{2d}^{d})]<+\infty$ and that $F(0)<p_{c}(d)$, where $p_{c}(d)$ is the critical parameter for bond percolation in $\Z^{d}$. Then, there exists a deterministic, compact and convex subset $B_{\ast}$ of $\R^{d}$ such that  for all $\epsilon>0$ we have:
    $$
    \P((1-\epsilon)B_{\ast}\subset \frac{B_{t}}{t}\subset (1+\epsilon)B_{\ast} \text{ for all large }t)=1.
    $$
    Furthermore, $B_{\ast}$ is symmetric with respect to the axes of \;$\mathbb{R}^{d}$ and has a non-empty interior.
\end{theo}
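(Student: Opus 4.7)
The plan is to upgrade Theorem~\ref{th:papclass} to a uniform convergence statement by realizing $\mu$ as a norm on $\R^{d}$ whose closed unit ball is $B_{\ast}$. First, I would extend $\mu$ from $\Z^{d}$ to $\Q^{d}$ via the homogeneity relation $\mu(qx)=q\mu(x)$ for rational $q>0$, which follows from Theorem~\ref{th:papclass} applied along appropriate subsequences. Subadditivity $\mu(x+y)\leq\mu(x)+\mu(y)$ comes from $T(0,n(x+y))\leq T(0,nx)+T(nx,n(x+y))$ combined with translation invariance of the environment, while the coordinate symmetries of the lattice give $\mu(\sigma x)=\mu(x)$ for every signed permutation $\sigma$. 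Combined with subadditivity, this yields $\mu(x)\leq \mu(e_{1})\lVert x\rVert_{1}$, finite by the assumed moment bound, so $\mu$ is Lipschitz on $\Q^{d}$ and extends continuously to all of $\R^{d}$. The coordinate symmetries will also transfer directly to the symmetry property of $B_{\ast}$ in the statement.

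The first main obstacle is to show that $\mu(x)>0$ for every $x\neq 0$, which is where the hypothesis $F(0)<p_{c}(d)$ is essential. Under this assumption the subgraph of edges with zero passage time is subcritical, so the diameter of the connected cluster of the origin has exponential tails by classical subcritical Bernoulli percolation. Consequently any path from $0$ to a point at $\ell^{1}$-distance $n$ must traverse order $n$ edges with strictly positive passage time, and a renewal-style lower bound then delivers a constant $c>0$ with $\mu(x)\geq c\lVert x\rVert_{1}$. Combined with the earlier upper estimate, this forces $B_{\ast}:=\{x:\mu(x)\leq 1\}$ to be compact and to contain a neighbourhood of the origin, which gives the non-empty interior claim; convexity is immediate from subadditivity and homogeneity.

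The second main step, and the technical heart of the proof, is to upgrade pointwise convergence into the uniform statement
\[
\lim_{\lVert x\rVert_{1}\to\infty}\frac{T(0,x)-\mu(x)}{\lVert x\rVert_{1}}=0\quad\text{almost surely},
\]
from which the sandwich $(1-\epsilon)B_{\ast}\subset B_{t}/t\subset(1+\epsilon)B_{\ast}$ follows by rewriting it in terms of sublevel sets of $\mu$. To prove it I would fix a fine finite $\delta$-net $\{y_{1},\dots,y_{N}\}$ of rational directions on the unit sphere on which Theorem~\ref{th:papclass} holds simultaneously almost surely, and for a generic $x$ approximate $T(0,x)$ by $T(0,\lfloor \lVert x\rVert_{1}\rfloor y_{i(x)})$ using the Lipschitz estimate on $\mu$ together with a control of the extremal passage time $\max_{\lVert z\rVert_{1}\leq \delta n}T(0,z)$. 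This extremal control is exactly where the hypothesis $\E[\min(\tau_{1}^{d},\dots,\tau_{2d}^{d})]<+\infty$ is used in a sharp way, being the precise moment condition making a Borel--Cantelli argument succeed over balls of cardinality of order $n^{d}$.

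This uniform control is the step I expect to be the most delicate: the pointwise convergence is essentially a one-dimensional Kingman-type statement, but promoting it to a statement over all directions at once requires careful coupling between the rational skeleton and a continuum of nearby endpoints, with the $d$-th moment assumption performing a very specific role. Once this uniform estimate is established, however, the shape theorem reduces to purely deterministic manipulations between norms and their sublevel sets, and the three asserted properties of $B_{\ast}$ (compact convex with non-empty interior, symmetric under coordinate reflections) follow directly from the corresponding properties of $\mu$ established in the first two paragraphs.
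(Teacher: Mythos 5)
This statement is quoted from Cox and Durrett \cite{Cox} as background; the paper itself gives no proof of it, so there is nothing internal to compare against. Your outline is the standard route to the classical shape theorem (extend $\mu$ by subadditivity and homogeneity to a lattice-symmetric norm, positivity via $F(0)<p_c(d)$, a net of rational directions plus a uniform difference estimate, Borel--Cantelli exploiting the $d$-th moment of the minimum over the $2d$ adjacent edges), which is essentially the argument of \cite{Cox} as presented in \cite{Survey}, and also the scheme the paper later adapts for its own Theorem \ref{th:forme} through Lemma \ref{lem:unif}.

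Two steps are stated more lightly than they deserve. First, the lower bound $\mu(x)\geq c\lVert x\rVert_1$ under $F(0)<p_c(d)$ is Kesten's theorem, and the gloss ``exponential tails of the zero-time cluster plus a renewal-style bound'' hides the real difficulty: the cheap clusters met successively along a path are explored in a dependent way, so the honest proof bounds, by a counting argument over self-avoiding paths (or a coarse-graining/BK-type argument), the probability that \emph{some} path of $n$ edges from the origin contains fewer than $cn$ edges of passage time at least $\delta$, where $\delta>0$ is chosen so that $\P(\tau\leq\delta)<p_c(d)$. Second, the uniform control you need is not $\max_{\lVert z\rVert_1\leq\delta n}T(0,z)$ but a difference estimate based at arbitrary far-away points, i.e.\ a simultaneous bound on $\sup\{T(x,z):\lVert z-x\rVert_1\leq\delta\lVert x\rVert_1\}$ over all large $x$ (Lemma 2.20 of \cite{Survey}, whose analogue in this paper is Lemma \ref{lem:unif}); your closing remark about Borel--Cantelli over order $n^{d}$ centres shows you have the right mechanism in mind, but the quantity must be corrected, and it is exactly there that $\E[\min(\tau_{1}^{d},\dots,\tau_{2d}^{d})]<+\infty$ (a moment on the cheapest of the $2d$ disjoint exits from a vertex, not on a single edge) enters sharply. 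With these two repairs the outline is correct and matches the classical proof.
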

Several slightly different models have been introduced, we mention in particular
\begin{itemize}
    \item The case where no integrability condition holds for $\tau$, in particular where $\P(\tau=+\infty)>0$ has also been studied, notably by Garet and Marchand in \cite{garetmarchand} and by Cerf and Théret in \cite{Theret}. They also proved an analogue of Theorem \ref{th:papclass} with convergence in probability.
    \item The case where passage times are non-independent.
    For example, the stationary ergodic case has been studied in \cite{fppergo}. This model, still rich, has led to many analogous results to the classical ones.
\end{itemize}
 In contrast to this, we consider in this article a long-range dependence model, which, to our knowledge, has not yet been studied. The name of this  model is inspired by the one studied by Duminil-Copin, Hilario, Kozma and Sidoravicius in \cite{Brochette} in the context of bond percolation. As they do in their article we introduce a strong dependence along the lines.
 \subsection{Brochette first-passage percolation}
 As in the classical case we consider the graph $(\Z^{d},\E^{d})$. In this model, the passage times associated with edges on the same line are equal. However, the passage times associated with edges on distinct lines still are independent.
 Let us now define our model more formally.
 \begin{defi}
 An integer line is the set of points in $\Z^{d}$ that lie on a same line in $\R^{d}$ parallel to one of the axes. We denote by $\Delta$ the set of these lines.
 \end{defi}
 \begin{defi}[Brochette first-passage percolation]\label{hyp:model}
 Let $(\tau_{\delta})_{\delta\in\Delta}$ be a family of non-negative i.i.d random variables with common cumulative distribution function $F$ and indexed by the set of integer lines. In the Brochette first-passage percolation we associate the same passage time to all edges whose endpoints lie on the same integer line $\delta\in\Delta$. In other words, for an edge $e\in\E^d$, the passage time $\tau_e$ of $e$ is equal to $\tau_\delta$ where $\delta$ is the unique integer line to which $e$ belongs.
  \end{defi}
  From now on $T$ and $B_t$ denote passage times and random balls for Brochette first-passage percolation.
  Throughout the text, we assume that the support of our passage times is included in an interval of the form $[a;+\infty)$, where $a\geq 0$ and $a$ equals the infimum of this support. In particular we will use that :
 \begin{equation}\label{hypo:a}
     \forall \delta>0,\; F( a+\delta)>0 \;\;and\;\; F(a-\delta)=0.
 \end{equation}
In the Brochette model, $T(0,n)$ is more sensitive to small values of passage times. Small passage times generate kind of \emph{highways} that geodesics tend to go through.
Thus, the random balls do not look alike for the classical and the Brochette models. (See a simulation on Figure \ref{Figure 1}.)
 \begin{figure}
    \centering
    \begin{subfigure}{0.4\textwidth}
        \centering
        \includegraphics[width=\textwidth]{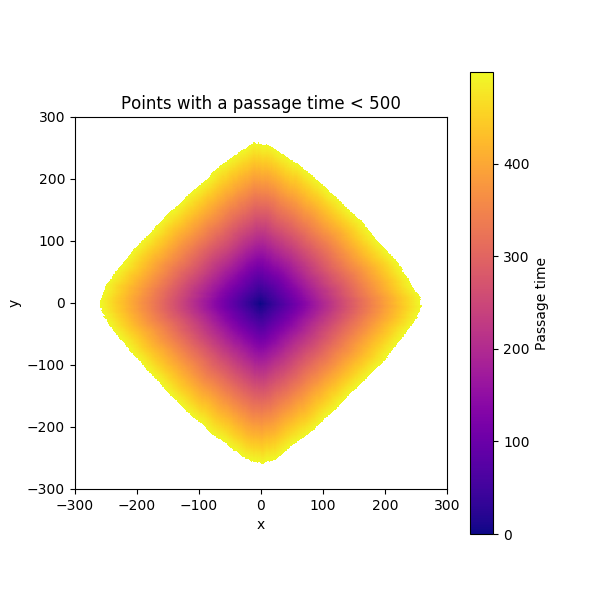}
        
    \end{subfigure}
    \hspace{0.05\textwidth}
    \begin{subfigure}{0.4\textwidth}
        \centering
        \includegraphics[width=\textwidth]{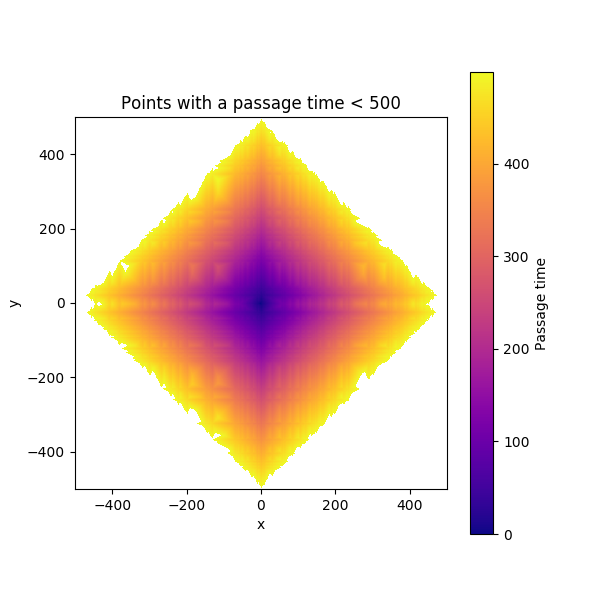}
        
    \end{subfigure}
    \caption{The random balls $B_t$ for  $t=500$ for the distribution $1+Bernoulli(0.95)$ for the classical model (on the left) and the Brochette model (on the right).}
    \label{Figure 1}
\end{figure}
\subsection{The results}
Our first result is that we can identify the time constant in the Brochette model.
 \begin{theo}[Point to point convergence theorem]\label{th:pap}
For Brochette first-passage percolation, if we assume that
\begin{equation}\label{hypo:base}
\mathbb{E}\lbrack \min(\tau_{1},\dots,\tau_{d})\rbrack<+\infty,
\end{equation}
where $\tau_{1},\dots,\tau_{d}$ are  $d$ independent copies with distribution $F$, then we have:
$$
\forall x\in\mathbb{Z}^{d},\text{\;\;\;}\lim_{n\rightarrow+\infty}\frac{T(0,nx)}{n}=a\lVert x\rVert_1 \text{\;\;\;\;\;} a.s. \text{\;and in\;} L^{1},
$$
where we recall that $a$ is the infimum of the support of $\tau$.
\end{theo}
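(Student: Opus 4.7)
The lower bound $T(0,nx)/n \ge a\|x\|_1$ is deterministic: every lattice path from $0$ to $nx$ uses at least $\|nx\|_1 = n\|x\|_1$ edges, each of weight at least $a$. The matching upper bound is the main content.

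I would first establish the axis case $x = e_j$ and then extend to general $x$ via subadditivity. Fix $\epsilon > 0$ and set $p = F(a+\epsilon) > 0$ by \eqref{hypo:a}; call a line \emph{good} if its passage time is at most $a+\epsilon$. Setting $K = C\log n$ with $C$ large enough that the probability $(1-p)^{(2K+1)^{d-1}}$ of finding no good axis-$e_j$ line in a transverse box of side $2K$ around the segment $[0, ne_j]$ is summable in $n$, a Borel--Cantelli argument produces such a good line $\ell^*$ almost surely for every large $n$. The constructed path traverses $\ell^*$ for its main leg (contributing at most $n(a+\epsilon)$), and joins $0$ to a vertex of $\ell^*$, and the other end of the traverse to $ne_j$, via two ``connectors'' of length $O(\log n)$.

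The main obstacle is estimating the connector costs, since the lines they traverse need not have integrable passage times (the only moment hypothesis is \eqref{hypo:base}). The trick is to design each connector so that its cost is bounded by a constant times $K \cdot \min(\tau^{(v)}_1,\ldots,\tau^{(v)}_d)$, where $\tau^{(v)}_1,\ldots,\tau^{(v)}_d$ are the passage times of the $d$ lines through the endpoint $v \in \{0, ne_j\}$. One achieves this by branching at $v$: the connector begins along the cheapest of the $d$ lines through $v$ and only then switches to good lines for the remaining transverse displacement. A careful choice of the random geometric-tail distances used in the connectors (so that each depends only on lines disjoint from those involved in the corresponding minimum) yields independence, so that
\[
\mathbb{E}[T(0,ne_j)]/n \le a+\epsilon + o(1).
\]
The same pathwise bound, together with the fact that each such minimum is at most $\tau_{\ell_j^0}$, the passage time of the axis-$e_j$ line through $0$ (which passes through $ne_j$ as well, hence is one of the $d$ lines at each endpoint), gives $\limsup T(0,ne_j)/n \le a+\epsilon$ almost surely. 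Letting $\epsilon \to 0$ closes the axis case.

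For general $x \in \Z^d$, set $y_k = n\sum_{i \le k} x_i e_i$ and use subadditivity $T(0, nx) \le \sum_{i=1}^{d} T(y_{i-1}, y_i)$. The law of $(\tau_\delta)_\delta$ is invariant under $\Z^d$-translations (each translation merely permutes the i.i.d.\ line-indexed passage times), so the axis case gives $T(y_{i-1}, y_i)/n \to a|x_i|$ almost surely and in $L^1$ for each $i$. Summing and combining with the deterministic lower bound yields $T(0,nx)/n \to a\|x\|_1$ almost surely; since the non-negative difference $T(0,nx)/n - a\|x\|_1$ has expectation tending to $0$, Scheff\'e's lemma upgrades the convergence to $L^1$.
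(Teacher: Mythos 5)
Your treatment of the axis case is correct and genuinely different from the paper's. The paper first applies Kingman's subadditive theorem (checking integrability of $T(0,x)$ via $d$ paths with independent passage times, exactly under \eqref{hypo:base}), and then only proves a $\liminf$ bound: the connector at the far endpoint $ne_1$ uses the line $ne_1+\Z e_2$, whose passage time may be non-integrable and is only controlled along a subsequence (it is infinitely often below $a+1$); the existence of the a.s.\ limit supplied by Kingman converts the $\liminf$ bound into the limit. You avoid Kingman by the observation that the $e_j$-line through $0$ also passes through $ne_j$, so both connector minima are dominated by one fixed a.s.\ finite random variable, giving a direct $\limsup$ bound, and the independence of the geometric-type distances from these minima gives $\E[T(0,ne_j)]\le n(a+\epsilon)+O(\log n)$ under \eqref{hypo:base} alone — a quantitative statement the paper does not produce in Section 2. (For $d\ge 3$ the "switch to good lines" bookkeeping should be written out, e.g.\ by working in a fixed two-dimensional slice as the paper implicitly does, but this is routine.)

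The passage to general $x$, however, has a genuine gap. With $y_k=n\sum_{i\le k}x_ie_i$, the translation taking the segment $(y_{i-1},y_i)$ to $(0,n|x_i|e_i)$ depends on $n$. Translation invariance therefore only yields, for each fixed $n$, the equality in law $T(y_{i-1},y_i)\overset{(d)}{=}T(0,n|x_i|e_i)$, hence convergence in probability of $T(y_{i-1},y_i)/n$ to $a|x_i|$; it does not transfer the almost sure convergence, because the joint law of the sequence $\bigl(T(y_{i-1},y_i)\bigr)_{n\ge1}$ is not that of $\bigl(T(0,n|x_i|e_i)\bigr)_{n\ge1}$. This is precisely the point where the paper leans on Kingman: there the a.s.\ limit of $T(0,nx)/n$ is already known to exist, so an upper bound converging in probability identifies it; in your scheme, which dispenses with Kingman, the in-probability bound proves nothing about the a.s.\ behaviour. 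Note also that your key trick does not carry over verbatim: the common line through the endpoints of the $i$-th segment is $y_{i-1}+\Z e_i$, which changes with $n$, so its passage time is a fresh, possibly non-integrable variable rather than a fixed one. The gap is repairable within your framework: run the connector construction between $y_{i-1}$ and $y_i$ and check that the total error (beyond the main leg of cost at most $n|x_i|(a+\epsilon)$ and good-line legs) is stochastically dominated, uniformly in $n$, by a fixed variable $Z=G\cdot\min(\tau_1,\dots,\tau_d)$ with $G$ geometric-tailed and independent of the minimum, hence $\E[Z]<\infty$; then $\sum_n\P(Z>\epsilon n)\le\E[Z]/\epsilon<\infty$ and the first Borel--Cantelli lemma (no independence in $n$ needed) gives that the error is $o(n)$ a.s. Alternatively, simply reinstate Kingman as in the paper and keep your construction only to identify the limit.
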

Observe that by the definition of $a$, $T(0,x)\geq a\lVert x\rVert_1$ $a.s.$ so the non trivial part of Theorem \ref{th:pap} is the upper bound. 
\begin{rem}
Let us note that Assumption \eqref{hypo:base} is not the same as in the i.i.d. case, as there are $d$ independent integer lines passing through each vertex instead of $2d$ edges as in the classical model. It follows that \eqref{hypo:base} is optimal. Indeed, if Assumption \eqref{hypo:base} is not satisfied $T(0,e_{1})$ is not integrable.
\end{rem}
Our Theorem \ref{th:pap} states that in the case $a=0$, $T(0,n)$ is sublinear, with the exact order of magnitude
yet to be determined. In the classical model, it is known that $\E[T(0,n)]$ is sublinear as soon as $F(0)\geq p_c$ (see Theorem 1.15 in \cite{Kesten}). Several works tried to better understand the behaviour of $\E[T(0,n)]$ in this case. First Zhang \cite{zhang} established that  $\E[T(0,n)]$ remains bounded if $F(0)>p_c$. The case $F(0)=p_c$ is more subtle. We mention a few results in dimension $d=2$. Chayes, Chayes and Durrett proved in \cite{chayes} that if $F$ is a Bernoulli distribution of parameter $p_c$, then there exist $C_1,C_2>0$ such that $C_1\log(n)\leq\E[T(0,n)]\leq C_2\log(n)$ for all $n$ large enough. Besides Zhang also showed in \cite{zhang2} that there are some distributions $F$ such that $F(0)=p_c$ for which $\E[T(0,n)]$ converges to a non-negative constant (see also \cite{damron} for more related results). 

We will see that, for the Brochette model, depending on the behaviour of the distribution function $F$ of the passage times near 0, a wide variety of behaviours can emerge even in dimension $d=2$.
\begin{theo}\label{th:0tb}
    Consider Brochette first-passage percolation with $a=0$ in dimension $d=2$. If there exist $C>0,\beta\geq0$ such that $F(t)\sim_{t\rightarrow 0+}Ct^{\beta}$, and if the passage times are bounded and atomless, then for all $x\in\Z^{2}$ there exist  $A_1,A_2>0$ such that for all $n\geq2$:

\begin{itemize}
    \item If $\beta<1$, $A_1\leq \E[T(0,nx)]\leq A_2.$
    \item If $\beta=1$, $A_1 \log(n)\leq \E[T(0,nx)]\leq A_2 \log(n).$
    \item If $\beta>1$, $A_1n^{1-\frac{1}{\beta}}\leq \E[T(0,nx)]\leq A_2n^{1-\frac{1}{\beta}}.$
\end{itemize}
\end{theo}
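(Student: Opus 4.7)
The plan is to prove matching upper and lower bounds in each of the three regimes, exploiting the extreme value estimate $\P(\min_{1\leq i\leq N}\tau_i\leq t)=1-(1-F(t))^N\sim 1-e^{-CNt^\beta}$ together with the independence of the line passage times. Throughout I write $\phi(n)=\E[T(0,(n,0))]$ and use the symmetry $T(0,(n,0))\stackrel{d}{=}T(0,(0,n))$.

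\textbf{Upper bound via a recursive shortcut.} For a parameter $\epsilon>0$ let $Y^+=\min\{y\geq 1:\tau_{h,y}\leq\epsilon\}$; this is geometric with parameter $\sim C\epsilon^\beta$, so $\E[Y^+]\lesssim \epsilon^{-\beta}$. The path $0\to(0,Y^+)\to(n,Y^+)\to(n,0)$ whose vertical legs are built by recursion yields
\begin{equation*}
    \phi(n)\leq 2\,\E\bigl[\phi(Y^+)\bigr]+n\epsilon.
\end{equation*}
For $\beta>1$, $n\mapsto n^{1-1/\beta}$ is concave, so Jensen gives $\E[\phi(Y^+)]\lesssim \phi(\E[Y^+])$, and choosing $\epsilon=cn^{-1/\beta}$ with $c$ large enough closes the induction. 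For $\beta=1$ I would instead iterate at dyadic scales $\epsilon_k=2^{-k}$, each scale contributing an $O(1)$ overhead, and sum over $k=0,\ldots,\lfloor\log_2 n\rfloor$. For $\beta<1$ cheap lines are so dense that $Y^+=O(1)$ for a constant threshold $\epsilon$, so only boundedly many recursive steps are needed.

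\textbf{Lower bound via confinement.} For a box of radius $R$ define
\begin{equation*}
    E_{R,t}=\bigl\{\tau_\delta>t\text{ for every integer line $\delta$ meeting }[-R,R]^2\bigr\}.
\end{equation*}
This event involves $2(2R+1)$ independent lines, so $\P(E_{R,t})\geq(1-F(t))^{2(2R+1)}$ is bounded below by a positive constant whenever $Rt^\beta$ stays bounded. On $E_{R,t}$ any path from $0$ to $(n,0)$ staying inside the box uses at least $n$ horizontal edges, each of cost $>t$, giving cost $>nt$; any path leaving the box must first traverse at least $R$ controlled edges (horizontal or vertical), giving cost $>Rt$. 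Hence $T(0,(n,0))\geq\min(n,R)\,t$ on $E_{R,t}$, which removes the need for an a priori bound on the diameter of the geodesic. Taking $R=n$, $t=cn^{-1/\beta}$ yields $\phi(n)\gtrsim n^{1-1/\beta}$ for $\beta>1$; constant $R$ and $t$ yield $\phi(n)\gtrsim 1$ for $\beta<1$. For $\beta=1$ a single box produces only $\Omega(1)$, so I would apply the argument separately to a dyadic family of annuli $\{2^{k-1}<\max(|x|,|y|)\leq 2^k\}$ for $k=1,\ldots,\lfloor\log_2 n\rfloor$, extracting an $\Omega(1)$ contribution from each (independent) annulus and summing to $\Omega(\log n)$.

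\textbf{Main obstacle.} The most delicate regime is $\beta=1$, where both the upper and the lower bound require aggregating $\log n$ scales with constants that do not blow up, in the spirit of Chayes, Chayes and Durrett~\cite{chayes}. A secondary subtlety, specific to the case $a=0$, is that the geodesic is not a priori confined to a finite region; the in-or-out dichotomy $\min(n,R)\,t$ in the lower bound, and the concavity of the target function in the upper bound, are precisely what allow us to avoid any separate control on the length or diameter of the geodesic.
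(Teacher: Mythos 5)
Your lower bound is essentially sound, and it is in fact the paper's lower bound in disguise: the paper notes that a path to $nx$ must cross every $L^1$-sphere of radius $k\le \lVert nx\rVert_1-1$, that the crossing edges lie on at most $4k+2$ distinct (hence i.i.d.) lines, and concludes $\E[T(0,nx)]\ge\sum_k\E[M_{4k+2}]$ by linearity of expectation; your confinement boxes/annuli do the same at dyadic scales. One correction there: your annuli are \emph{not} independent (the line $y=0$ meets every annulus), but you do not need independence --- bound the cost of crossing the $k$-th annulus below by its width times the minimum of the $O(2^k)$ lines meeting it and sum expectations.

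The genuine gap is in the upper bound for $\beta\le 1$: the recursion $\phi(n)\le 2\,\E[\phi(Y^+)]+n\epsilon$ is structurally too lossy because it branches (factor $2$) at every level. For $\beta=1$, to make the transit term $n\epsilon$ of order $\log n$ you must take $\epsilon\approx\log n/n$, so the two vertical subproblems have size about $n/\log n$; unrolling, the number of subproblems doubles at each level while the size shrinks only by a $\log$ factor, and the best solution of this recursion is of order $\exp(c\sqrt{\log n})$, far above $\log n$ --- no choice of thresholds ``$\epsilon_k=2^{-k}$'' repairs this. For $\beta<1$ your prescription is wrong as written: with a \emph{constant} threshold $\epsilon$ the term $n\epsilon$ is linear in $n$; with the correct $\epsilon\approx 1/n$ one gets $Y^+\approx n^{\beta}$ and the recursion $\phi(n)\le 2\phi(n^{\beta})+O(1)$, whose solution is a positive power of $\log n$, not $O(1)$. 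The fix is to make the construction non-branching: from \emph{each} endpoint build a single path that moves to progressively cheaper lines (at scale $k$ you travel distance $O(k)$ along a line of cost $\approx\E[M_k]$ per edge), and join the two paths where they meet; then the per-scale costs add up to a constant times $\sum_k\E[M_k]$, which is exactly the paper's Theorem~\ref{th:0} (the greedy walk to the cheapest vertical/horizontal lines $(X_k,Y_k)$ in $[0,k]^2$, done from both $0$ and $nx$, giving $\E[T(0,nx)]\le 8\sum_k\E[M_k]$, combined with $\E[M_k]\asymp k^{-1/\beta}$ from Lemma~\ref{lem:tpsborne}). Two smaller points in your $\beta>1$ case, both fixable but unaddressed: conditionally on $Y^+=m$ the horizontal lines $1,\dots,m-1$ are biased upward, so the vertical legs are not copies of $\phi(Y^+)$ in a fresh environment (recurse into the half-plane of negative ordinates, say); and $Y^+$ may exceed $n$, so the induction needs a cap or an a priori bound to be well-founded.
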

\begin{rem}
We could not obtain comparable results in higher dimensions, which is why we focus on the case $d=2$.
   
\end{rem}
The third result is the shape theorem.
We denote by $\Diamond$ the closed unit ball for the $L^1$ norm.
\begin{theo}\label{th:forme}
   For Brochette first-passage percolation, if we assume that
    
    \begin{equation}\label{condition}
    \E[\min(\tau_{1}^{d},\dots,\tau_{d}^{d})]<+\infty 
    \end{equation}
    
    and that $a>0$ verifies \eqref{hypo:a}, then for all $\epsilon>0$ we have:
    $$
    \P(\frac{1-\epsilon}{a}\Diamond\subset \frac{B_{t}}{t}\subset \frac{1}{a}\Diamond \text{ for all large }t)=1.
    $$
    Furthermore, if $a=0$ we have for all $M>0$:
    $$
     \P(M\Diamond\subset \frac{B_{t}}{t} \text{ for all large } t)=1.
    $$
\end{theo}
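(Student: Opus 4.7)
The upper inclusion $B_t/t \subset \Diamond/a$ in the case $a > 0$ is essentially immediate: since $\tau_\delta \geq a$ almost surely for every integer line, every path from $0$ to $z \in \Z^d$ has passage time at least $a\|z\|_1$, so $B_t \cap \Z^d \subset (t/a)\Diamond$, and the $O(1)$ lattice correction coming from the extension to real points in \eqref{temps} becomes negligible after dividing by $t$.

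For the lower inclusion I would treat both cases uniformly by aiming at
\begin{equation*}
\limsup_{\|z\|_1 \to \infty,\, z \in \Z^d} \frac{T(0,z)}{\|z\|_1} \leq a \qquad \text{almost surely.}
\end{equation*}
Letting $\epsilon \to 0$, this recovers $(1-\epsilon)/a \cdot \Diamond \subset B_t/t$ in the case $a > 0$; in the case $a = 0$, it forces $T(0,z)/\|z\|_1 \to 0$, hence $M\Diamond \subset B_t/t$ for every $M > 0$.

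The strategy would be a Cox--Durrett style interpolation over a finite grid of rational directions. Fix $\eta > 0$ and select rational unit $L^1$-vectors $u_1, \dots, u_N$ whose $L^1$-balls of radius $\eta$ cover $\partial\Diamond$, with positive integers $q_i$ such that $q_i u_i \in \Z^d$. Theorem \ref{th:pap} together with a finite union bound ensures that, almost surely, $T(0, n q_i u_i)/n \to a q_i$ for every $i$. Given $z \in \Z^d$ with $\|z\|_1$ large, I would pick $u_i$ closest to $z/\|z\|_1$ and an integer $n$ such that $z' := n q_i u_i$ satisfies $\|z - z'\|_1 \leq 2\eta\|z\|_1$, and then apply the triangle inequality $T(0, z) \leq T(0, z') + T(z', z)$. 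The first term is bounded by $(a + \epsilon)\|z\|_1$ eventually, and it remains to show that $T(z', z) \leq C\eta\|z\|_1$ almost surely, uniformly in $z$.

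To bound this correction term, I would exploit the staircase estimate $T(y, y') \leq \sum_{j=1}^d |y_j - y'_j|\,\tau_{\delta_j}$, valid for any ordering of coordinates traversed. Considering several staircases (by permuting the coordinate order and shifting the turning points perpendicularly) produces candidate paths using pairwise disjoint families of lines, so the minimum of the corresponding passage times is controlled by a minimum of many independent copies of $\tau$, whose tail is governed by assumption \eqref{condition}. A union bound over all pairs $(z, z')$ with $\|z\|_1$ in a dyadic shell $[R, 2R]$ and $\|z - z'\|_1 \leq 2\eta R$, whose cardinality is polynomial in $R$, followed by Borel--Cantelli across the scales $R = 2^k$, would then deliver the needed uniform bound. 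The main obstacle is precisely this last tail estimate: the long-range dependence inherited from the Brochette structure precludes direct use of classical independent-FPP concentration results, so one must carefully engineer enough almost-independent candidate staircases between close integer points to leverage \eqref{condition}, whose exponent $d$ matches exactly the number of distinct lines one must traverse to connect two nearby points in $\Z^d$.
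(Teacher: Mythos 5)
Your outer framework (immediate upper inclusion, plus interpolation over finitely many rational directions using Theorem \ref{th:pap} and a uniform control of the correction term $T(z',z)$) is the standard Cox--Durrett scheme and is fine in principle, but the step you yourself flag as the main obstacle is where the proposal genuinely breaks down, and your proposed mechanism cannot be made to work. First, you cannot produce ``many'' staircases between $z'$ and $z$ using pairwise disjoint families of lines: every path leaving $z'$ must start on one of the $d$ integer lines through $z'$, so at most $d$ paths between two fixed points can use disjoint line families. Second, and more fatally, a staircase of length $\ell$ uses only $d$ lines no matter how long it is, so the bound $T(\mathrm{staircase})\leq \ell\max_j\tau_{\delta_j}$ gives $\P(T(z',z)>C\eta R)\geq c_0>0$ for unbounded $\tau$, a constant independent of $R$: there is no averaging along the path, and condition \eqref{condition} only controls the tail $\P(\min(\tau_1,\dots,\tau_d)>s)=o(s^{-d})$ as $s\to\infty$, not the probability that this minimum exceeds a fixed constant. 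Hence even with the (at most) $d$ independent candidates, the per-pair failure probability does not decay in $R$, and your union bound over polynomially many pairs in a dyadic shell followed by Borel--Cantelli cannot close.

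This is precisely the difficulty the paper's proof is built around. Instead of short staircases, Lemma \ref{lem:unif} constructs $d+1$ \emph{long} skeleton paths made of $\sim\lVert x\rVert_1$ small ``envelope'' pieces whose passage times are nearly independent; each path's time then concentrates around its mean (Chebyshev gives failure probability of order $1/\lVert x\rVert_1$), and the near-independence of the $d+1$ paths yields a bound of order $\lVert x\rVert_1^{-(d+1)}$, which is summable over $\Z^d$ (the $d$-th moment in \eqref{condition} is used separately to make $\sum_x\P(\tilde{T}>\lVert x\rVert_1/8)$ finite). This gives the difference estimate only with \emph{positive} probability for a fixed base point, and the paper then upgrades it using ergodicity of translations in directions off the axes (the model is not ergodic under axis translations), before concluding along the lines of Theorem 2.16 of \cite{Survey}. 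To repair your argument you would essentially need a per-pair estimate of the form $\P(T(z',z)>C\eta R)\lesssim(\eta R)^{-(d+1)}$ for nearby pairs, i.e.\ the content of Lemma \ref{lem:unif} itself; the staircase-plus-\eqref{condition} reasoning you propose does not supply any decay in $R$ at all.
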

\begin{rem}
It will be seen in Proposition \ref{prop:condition} that Condition \eqref{condition} is optimal:  there is no shape theorem if \eqref{condition} does not hold.
\end{rem}
Finally, we consider one last case: when $\P(\tau=+\infty)>0$. In this case, it amounts to studying first-passage percolation on the cluster $\mathcal{C}_{\ast}$ composed of lines whose passage times are finite. Moreover, as soon as $\P(\tau<+\infty)>0$ the obtained subgraph is both connected and infinite whereas it is not the case in the classical model. Indeed, if an edge has a finite passage time, then all the edges on the same line have the same finite passage time. Thus, it is easy to see that the subgraph is a union of integer lines.
In order to state Theorem \ref{th:infstar}, we need to introduce a generalized notion of passage time. 
\begin{defi}
If $x,y\in\Z^{d}$, let $T^{\ast}(x,y)=T(x^{\ast},y^{\ast})$ where $x^{\ast}$ is the closest vertex in $L^{1}$ norm to $x$ in the cluster of edges with finite passage times with an arbitrary rule to break ties. 
\end{defi}
\begin{theo}\label{th:infstar}
For Brochette first-passage percolation, if \;$\P(\tau<+\infty)>0$, we have:

$$
\forall x\in\mathbb{Z}^{d}\;\;\lim_{n\rightarrow+\infty}\frac{T^{\ast}(0,nx)}{n}=a\lVert x \rVert_1\text{\;\;\;in\;probability}.
$$
\end{theo}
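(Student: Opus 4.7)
The plan is to split the proof into three parts: a tight tail estimate for $\lVert v - v^*\rVert_1$, a lower bound using the $a$-barrier, and an upper bound by an explicit Manhattan-style construction through fast lines. For the tail estimate, I would show that for every vertex $v \in \Z^d$,
$\P(\lVert v - v^*\rVert_1 > K) \leq d(1-p)^{(2K+1)^{d-1}}$,
where $p := \P(\tau < +\infty) > 0$, by considering the $(2K+1)^{d-1}$ integer lines in each of the $d$ axis directions within $L^{\infty}$-distance $K$ of $v$ and using their independence. This yields tightness as $K \to \infty$, uniformly in $v$. The lower bound is then immediate: since every finite line has passage time at least $a$, any path in $\mathcal{C}_{\ast}$ from $0^*$ to $(nx)^*$ contributes time at least $a\lVert 0^* - (nx)^*\rVert_1 \geq an\lVert x\rVert_1 - a(\lVert 0^*\rVert_1 + \lVert nx - (nx)^*\rVert_1)$. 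Dividing by $n$ gives $\liminf_n T^{\ast}(0,nx)/n \geq a\lVert x\rVert_1$ in probability.

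For the upper bound, fix $\epsilon > 0$ and note $p_\epsilon := F(a+\epsilon) > 0$ by \eqref{hypo:a}; call a line \emph{fast} if $\tau_\delta < a+\epsilon$ (which forces it to be finite). The construction follows the axis-aligned Manhattan path from $0$ to $nx$: for each coordinate $i$ with $x_i \neq 0$, I select a fast $e_i$-line $\delta_i$ whose $d-1$ orthogonal coordinates lie within $L^{\infty}$-distance $K$ of the waypoint $P_{i-1} := (nx_1, \ldots, nx_{i-1}, 0, \ldots, 0)$. With $(2K+1)^{d-1}$ independent candidates, such a $\delta_i$ exists with probability at least $1 - (1-p_\epsilon)^{(2K+1)^{d-1}} \to 1$ as $K \to \infty$. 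The path then concatenates a short bridge from $0^*$ to $\delta_1$, traversals of $\delta_1, \ldots, \delta_d$ (contributing at most $nx_i(a+\epsilon)$ on $\delta_i$), bridges between consecutive $\delta_i$ and $\delta_{i+1}$, and a final bridge from $\delta_d$ to $(nx)^*$. Each bridge has length $O(K)$ and travels along finite (though possibly slow) lines, contributing $O_P(1)$ to the total time. Summing yields $T^{\ast}(0,nx) \leq (a+\epsilon)n\lVert x\rVert_1 + O_P(1)$; letting first $n\to\infty$ and then $\epsilon\to 0$ gives the matching upper bound.

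The hard part will be implementing the bridges between consecutive fast highways. In dimension $d=2$ any $e_1$-line meets any $e_2$-line at a single point, so each bridge reduces to an intersection; but in $d \geq 3$, two lines in distinct directions generically do not intersect, and the bridge must be routed through a chain of intermediate finite lines in $\mathcal{C}_{\ast}$. I would establish the required local connectedness lemma --- that with probability close to one, any two vertices of $\mathcal{C}_{\ast}$ sitting in a common box of side $K$ are joined inside $\mathcal{C}_{\ast}$ by a path of length $O(K)$ --- by a renormalization over boxes of fixed side $L$ chosen large enough that, with high probability, each box contains finite lines in every direction forming a connected skeleton, and then coupling with a supercritical site percolation on the renormalized lattice. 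Finally, the weakening from almost-sure and $L^1$ convergence (as in Theorem \ref{th:pap}) to convergence in probability reflects both the absence of any integrability assumption on $\tau$ and the fact that $0^*$ and $(nx)^*$ are defined from independent local environments that do not stabilize as $n$ grows.
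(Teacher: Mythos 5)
Your route is genuinely different from the paper's. The paper regularizes the problem: it introduces the cluster $\mathcal{C}_M$ of lines with passage time below $M>a$ and the times $\tilde T_M$, proves an exponential bound on the chemical distance in $\mathcal{C}_M$ (Lemma \ref{th:chimique}), applies Kingman's theorem to get almost sure convergence of $\tilde T_M(0,nx)/n$, sandwiches the limit between $a\lVert x\rVert_1$ and $M\lVert x\rVert_1$ (Lemma \ref{lem:encadrement}), and finally compares $T^{\ast}$ with $\tilde T_M$ via $\mid T^{\ast}(0,nx)-\tilde T_M(0,nx)\mid\leq T(0^{\ast},\tilde 0)+T((nx)^{\ast},\widetilde{nx})=o_{\P}(n)$, letting $M\downarrow a$. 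You aim directly at convergence in probability: your barrier lower bound (each finite edge costs at least $a$, plus tightness of $\lVert v-v^{\ast}\rVert_1$) is correct and is the easy half, and your ``highway'' upper bound with fast lines $\tau_\delta<a+\epsilon$ (which exist near each waypoint with probability tending to $1$ in $K$, by \eqref{hypo:a}) is the same mechanism the paper uses in Section \ref{Section2} and inside Lemma \ref{lem:encadrement}; it does yield $T^{\ast}(0,nx)\leq(a+\epsilon)n\lVert x\rVert_1+O_{\P}(1)$ once the bridges are controlled. This buys a Kingman-free argument, at the acceptable price of obtaining only convergence in probability, which is all the theorem asserts. (Minor slip: the event $\lVert v-v^{\ast}\rVert_1>K$ forces \emph{all} candidate lines in \emph{all} directions to be infinite, so your union bound over directions is not the right shape, and lines at $L^\infty$-distance $K$ are only at $L^1$-distance $(d-1)K$; any crude version of this estimate still gives the tightness you need.)

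The genuine weak point is exactly the step you flag: bridging two fast lines of different directions when $d\geq3$. The tool you propose --- renormalization over boxes of fixed side $L$ coupled with supercritical site percolation --- does not apply as stated, because the Brochette dependence has infinite range: every box crossed by a given line shares that line's single passage-time variable, so the renormalized field is not finitely dependent and the standard domination-by-product-measure arguments cannot be invoked directly. It is also unnecessary. Since $\mathcal{C}_{\ast}$ is a union of whole lines, the bridge can be built by hand: slide along the first line and, at each successive integer offset, test whether the $d-2$ (or $d-1$) orthogonal lines completing a staircase to the target line are finite; these trials are independent Bernoulli variables with success probability at least $p^{d-1}$, so a geometric number of trials suffices, and the bridge has length $O(K)+O_{\P}(1)$ and finite passage time whose law is tight in $n$ by translation invariance. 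This is precisely the mechanism of the paper's Lemma \ref{th:chimique} (good $0$-points and good $x$-points) and of the geometric variable $K(n)$ in Lemma \ref{lem:encadrement}. With the bridges handled this way --- including the initial bridge from $0^{\ast}$ and the final one to $(nx)^{\ast}$, which need the same treatment even in $d=2$ when the line through $0^{\ast}$ happens to be parallel to your first highway --- your argument closes.
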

This is the analogue of Theorem 4 in \cite{Theret}, we follow the same structure of proof. 
An important difference is that \cite{Theret} relies on non-trivial estimates in classical percolation. In our case, the infinite cluster has a rather simple shape, which allows for self-contained proofs.
To conclude, we outline how the different sections of this article will be structured.
\begin{itemize}
    \item In Section \ref{Section2} we focus on point to point convergence Theorem. We prove Theorem \ref{th:pap}.
    \item In Section \ref{Section 3} we prove Theorem \ref{th:0tb} about the variety of behaviours for the expectation of the passage time in dimension $d=2$.
    \item In Section \ref{Section 4} we obtain a shape theorem by proving Theorem \ref{th:forme}.
    \item In Section \ref{Section 5} we prove Theorem \ref{th:infstar} about the convergence in probability of $\frac{T^{\ast}(0,nx)}{n}$.
\end{itemize}

\section{Point to point almost sure convergence}\label{Section2}
This section is devoted to the proof of Theorem \ref{th:pap} : the $a.s.$ convergence of $\frac{T(0,nx)}{n}$. In addition to prove the linear growth of the passage time we are able to determine the limit constant which is equal to $a \lVert x \rVert_1$. This is a first step towards understanding the behaviour of $T(0,x)$ in the context of the Brochette first-passage percolation. This result will be improved in Section \ref{Section 4}.
\begin{proof}[Proof of Theorem \ref{th:pap}] 
 To prove this theorem we will use the following version of Kingman's theorem:
\begin{theo}[\textbf{Subadditive Ergodic Theorem \cite{Kingman}}]\label{th:Kingman} Let $(X_{m,n})_{0\leq m\leq n}$ be a family of random variables that satisfy the following assumptions:
\begin{enumerate}[label=\alph{*})]
\item $X_{0,n}\leq X_{0,m}+X_{m,n}$ for all $0<m<n$.
\item The sequences $(X_{m,m+k})_{k\geq1}$ and $(X_{m+1,m+k+1})_{k\geq1}$ have the same distribution for all $m\geq0$.
\item For all $k\geq1$, the sequence $(X_{nk,(n+1)k})_{n\geq0}$ is stationary.
\item $\mathbb{E}\lbrack X_{0,1} \rbrack<+\infty$ and $\mathbb{E}\lbrack X_{0,n} \rbrack>-cn$ for some finite constant $c$. 
\end{enumerate}
Then there exists  a random variable $X$ such that:
$$
\frac{X_{0,n}}{n}\rightarrow_{n\rightarrow+\infty}X \text{\;\;\;} a.s. \text{\;and in\;} L^{1}. 
$$
\end{theo}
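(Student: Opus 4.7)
The plan is to prove Kingman's subadditive ergodic theorem by combining Fekete's lemma with Birkhoff's ergodic theorem applied along blocks of fixed length, and then matching $\limsup$ and $\liminf$ by exploiting the two stationarity hypotheses. First, assumptions (a) and (b) together give $\mathbb{E}[X_{0,n+m}] \leq \mathbb{E}[X_{0,n}] + \mathbb{E}[X_{n,n+m}] = \mathbb{E}[X_{0,n}] + \mathbb{E}[X_{0,m}]$, so the deterministic sequence $(\mathbb{E}[X_{0,n}])_n$ is subadditive; combined with the lower bound in (d), Fekete's lemma produces a finite constant $\gamma := \lim_{n} \mathbb{E}[X_{0,n}]/n = \inf_{n} \mathbb{E}[X_{0,n}]/n \in [-c,+\infty)$.

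For the almost sure upper bound, I would fix $k\geq 1$ and iterate (a) along blocks of length $k$ to get $X_{0,nk} \leq \sum_{i=0}^{n-1} X_{ik,(i+1)k}$. By (c), the sequence $(X_{ik,(i+1)k})_{i\geq 0}$ is stationary, so Birkhoff's ergodic theorem furnishes an a.s.\ and $L^1$ limit $Y_k$ of the Cesàro averages, with $\mathbb{E}[Y_k]=\mathbb{E}[X_{0,k}]$. After controlling the remainder when $n$ is not a multiple of $k$ via (b) and the integrability in (d), this yields $\limsup_n X_{0,n}/n \leq Y_k/k$ almost surely for each $k$. Taking the infimum over $k$ and using $\mathbb{E}[Y_k/k]=\mathbb{E}[X_{0,k}]/k\to\gamma$, we obtain $\mathbb{E}[\limsup_n X_{0,n}/n] \leq \gamma$.

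The heart of the argument is the matching lower bound, and this is the main obstacle: naive Fatou gives $\mathbb{E}[\liminf_n X_{0,n}/n] \leq \liminf_n \mathbb{E}[X_{0,n}/n] = \gamma$, which is the wrong direction. I would follow Kingman's original $\varepsilon$-optimal covering strategy (or the shorter Katznelson--Weiss variant): for $\varepsilon>0$ and a target level $\alpha$ close to $\liminf_n X_{0,n}/n$, one selects at each site $m$ a random length $\ell(m)$ such that $X_{m,m+\ell(m)}/\ell(m) \leq \alpha + \varepsilon$, then greedily partitions $[0,n]$ into blocks of these lengths, using stationarity (b)--(c) to control the measure of the sparse "uncovered" set and subadditivity (a) to convert the partition into an upper bound on $X_{0,n}$. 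Sending $n\to\infty$ and then $\varepsilon \to 0$ forces $\gamma \leq \mathbb{E}[\liminf_n X_{0,n}/n]$, hence $\mathbb{E}[\liminf_n X_{0,n}/n] = \mathbb{E}[\limsup_n X_{0,n}/n]$, and since $\liminf \leq \limsup$ pointwise both random variables coincide a.s.\ with a common limit $X$. The $L^1$ convergence then follows from uniform integrability: the positive parts are controlled via the subadditive bound $X_{0,n}^+ \leq \sum_{i=0}^{n-1} X_{i,i+1}^+$ and (b), while the negative parts are handled using (d) together with $\mathbb{E}[X_{0,n}/n]\to\gamma$. The delicate point is really the matching step, where the absence of ergodicity means $X$ is a genuine random variable rather than a constant, so all four hypotheses must be used simultaneously.
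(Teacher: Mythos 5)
This statement is not proved in the paper at all: it is quoted as a black box from \cite{Kingman} (in fact in the weakened form of the hypotheses b)--c) due to Liggett), and the paper only uses its conclusion. Your outline is therefore not comparable to anything in the paper, but on its own it is the standard proof of this version of the theorem (essentially Liggett's argument as presented, e.g., in Durrett): Fekete for $\gamma=\inf_n \E[X_{0,n}]/n$, Birkhoff along the stationary block sequences of c) to get $\limsup_n X_{0,n}/n\leq Y_k/k$ with $\E[Y_k]=\E[X_{0,k}]$, the Katznelson--Weiss/Kingman covering argument for the matching bound $\gamma\leq \E[\liminf_n X_{0,n}/n]$, and uniform integrability of the positive parts for the $L^1$ statement. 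Two points deserve care if you were to write it out in full: first, in the covering step the length $\ell(m)$ chosen at site $m$ must be defined from $\liminf_k X_{m,m+k}/k$ (truncated, and with lengths capped at a fixed horizon $L$), not from $\liminf_k X_{0,k}/k$; subadditivity gives $\liminf_k X_{m,m+k}/k\geq \liminf_k X_{0,k}/k$ and b) gives equality in law, which is exactly what closes the argument. Second, the remainder when $n$ is not a multiple of $k$ is controlled by Borel--Cantelli using that $\max_{0\leq j<k}X_{nk,nk+j}^{+}$ has, by b), the fixed integrable law of $\max_{0\leq j<k}X_{0,j}^{+}$. You also correctly identify that without ergodicity the limit $X$ is a genuine random variable; this matters for the paper, which must argue separately (via the explicit construction of cheap lines) that the limit is the constant $a\lVert x\rVert_1$. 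So: correct strategy, no gap beyond the expected level of detail of a sketch, but note it proves a cited classical theorem rather than anything the paper itself establishes.
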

\begin{figure}
\centerline{\includegraphics[scale=0.23]{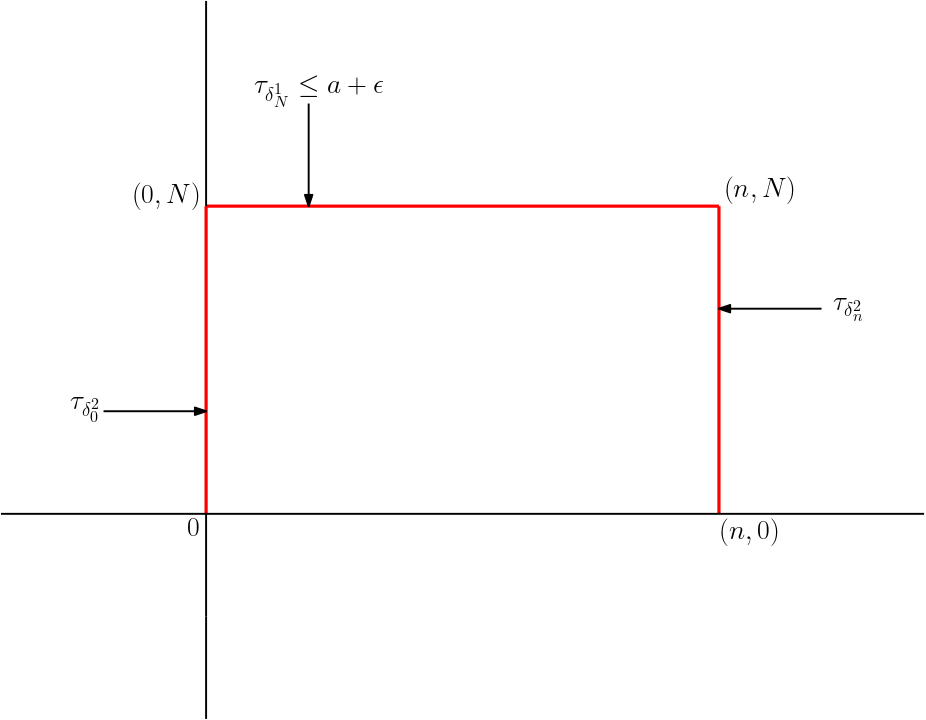}}~
\caption{The path going from $0$ to $ne_1$ following the construction.}
\label{Figure 2}
\end{figure}
Fix $x\in\mathbb{Z}^{d}$. We aim to apply Theorem \ref{th:Kingman} to $X_{m,n}=T(mx,nx)$. We verify briefly that $a),b),c),d)$ hold. $a)$ is immediate. As in the i.i.d case the environment is invariant under shifts of $\Z^{d}$ so assumptions $b)$ and $c)$ hold. Moreover, $\E[X_{0,n}]>-cn$ because passage times are non-negative. 
It remains to prove $E[X_{0,1}]<+\infty$.
There exist $d$ paths $\Gamma_1,\dots,\Gamma_d$ from $0$ to $x$ whose passage times are independent. Thus, $T(0,x)\leq \min(T(\Gamma_1),\dots,T(\Gamma_d))$ and if we let $L$ the length of the longest of these paths (say $\Gamma_1$) we have:
 \begin{align*}
     \P(T(0,x)>s)&\leq \P(\bigcap_{i=1}^{d}\{T(\Gamma_i)>s\})\\
                   &=\prod_{i=1}^{d}\P(T(\Gamma_i)>s)\\
                   &\leq \P(T(\Gamma_1)>s)^d\\
                   &=(L\P(\tau_e>\frac{s}{L}))^d.
 \end{align*}
 Therefore, considering $Y:=\min(\tau_1,\dots,\tau_d)$ where $\tau_1,\dots,\tau_d$ are i.i.d. with the same distribution $F$, we have:
 $$
\P(T(0,x)>s)\leq L^d\P(Y>\frac{s}{L}),
 $$
 from which we deduce, thanks to \eqref{hypo:base}, that $\E[X_{0,1}]<+\infty$.
Kingman's theorem therefore implies that there exists a random variable $X_x$ such that:
$$
\lim_{n\rightarrow+\infty}\frac{T(0,nx)}{n}=X_x \; a.s. \text{ and in } L^{1}.
$$
We first prove that for $x=e_1$, it suffices to show that:
$$
\forall \epsilon >0, \liminf \frac{T(0,ne_1)}{n}\leq a+\epsilon\; a.s..
$$
Let $\epsilon>0$. We denote by $\delta_k^1$ the line $ke_2+\Z e_1$ with $e_2:=(0,1,0,\dots,0)$. Let $N=\inf\{ k\geq 1, \tau_{\delta_k^1}\leq a+\epsilon\}$. Since $(\delta_k^1)_{k\in \N}$ are i.i.d. and by definition of $a$, $N<+\infty \; a.s.$. Let $\delta_k^2$ be the line $ke_1+\Z e_2$ (\text{See Figure \ref{Figure 2}}). Then:
$$
T(0,ne_1)\leq N \tau_{\delta_0^2}+ n(a+\epsilon)+ N\tau_{\delta_n^2}.
$$ 
Thus
$$
\frac{T(0,ne_1)}{n}\leq \frac{N}{n}\tau_{\delta_0^2}+(a+\epsilon)+ \frac{N}{n}\tau_{\delta_n^2}.
$$
Since $N<+\infty$ almost surely, doesn't depend on $n$ and $\tau_{\delta_0^2}<+\infty$ almost surely too, we have
$\underset{n\to+\infty}{\lim} \frac{N}{n}\tau_{\delta_0^2}=0$. Moreover, 
since $(\tau_{\delta_n^2})_n$ are i.i.d. with distribution $F$, they are infinitely often smaller than $a+1$ and we have 
$
\underset{n\to+\infty}{\liminf} \frac{N}{n}\tau_{\delta_n^2}=0.
$
We conclude that
$$
\liminf_{n\to+\infty} \frac{T(0,ne_1)}{n}\leq a+\epsilon\; a.s.. 
$$
Therefore we have
$$
\lim_{n\to+\infty} \frac{T(0,ne_1)}{n}=a \; a.s.\text{ and in\;}L^1.
$$
Now, let $x=(x_1,\dots,x_d)\in\N^d$ (the proof for $x\in\Z^d$ is the same). Let $y_i=(x_1,\dots,x_i,0,\dots,0)$. Then:
$$
T(0,nx)\leq \sum_{i=0}^{d-1} T(ny_i,ny_{i+1}).
$$
Since $T(ny_i,ny_{i+1})\overset{(d)}{=}T(0,nx_{i+1}e_{i+1})$ we have $\frac{T(ny_i,ny_{i+1})}{n}\underset{n\rightarrow+\infty}{\rightarrow}ax_{i+1}$ in probability.
Thus, we get:
$$
\lim_{n\to+\infty} \frac{T(0,nx)}{n}\leq a\sum_{i=0}^{d-1} x_{i+1}=a \lVert x \rVert_1.
$$
\end{proof}

\section{The case $a=0$}\label{Section 3}
In this section, we study the case where $a=0$ and $d=2$. We will see that, depending on $F$, a diverse range of behaviours for $T(0,n)$ can arise.
For this purpose we define the following random variable.
\begin{defi}
    If $\tau_{1},...,\tau_{n}$ are independent passage times we denote:
    $$
M_{n}:=\min(\tau_{1},...,\tau_{n}).
    $$
\end{defi}
  To prove Theorem \ref{th:0tb}, we will see it as a corollary of the following theorem.

\begin{theo}\label{th:0} For the Brochette first-passage percolation when $d=2$, if $a=0$ with $a$ defined by \eqref{hypo:a}, if the distribution of $\tau$ is atomless and if we assume that $\E[min(\tau_{1},\tau_{2})]<+\infty$ then we have:
$$
\forall x\in \Z^{2},\;\;\; \sum_{k=0}^{\lVert nx \rVert_1-1}\E[M_{4k+2}]\leq \E[T(0,nx)]\leq 8\sum_{k=0}^{\lVert nx\rVert_1-1}\E[M_{k}].
$$
\end{theo}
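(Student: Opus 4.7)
The plan is to prove the two inequalities separately: the lower bound follows from a geometric counting argument on $L^1$-diamonds, while the upper bound requires an explicit path construction.

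For the lower bound, I first count the integer lines meeting the $L^1$-ball $D_k = \{y \in \Z^2 : \|y\|_1 \leq k\}$: these are the $2k+1$ horizontal lines $\{y = j : -k \leq j \leq k\}$ together with the $2k+1$ vertical lines $\{x = i : -k \leq i \leq k\}$, giving $4k+2$ lines in total. For any path $\Gamma$ from $0$ to $nx$ and any $0 \leq k < \|nx\|_1$, I would let $e_k(\Gamma)$ denote the first edge of $\Gamma$ whose initial endpoint lies in $D_k$ and whose terminal endpoint lies outside $D_k$. Both endpoints of $e_k$ lie on the integer line supporting $e_k$, and at least one of them is in $D_k$, so that line meets $D_k$. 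Therefore $\tau_{e_k} \geq M^{(k)}$, where $M^{(k)}$ denotes the minimum of the passage times over the $4k+2$ lines meeting $D_k$, and $M^{(k)} \overset{(d)}{=} M_{4k+2}$. The edges $e_0, \dots, e_{\|nx\|_1-1}$ being pairwise distinct, one has $T(\Gamma) \geq \sum_{k=0}^{\|nx\|_1-1} M^{(k)}$; taking the infimum over $\Gamma$ and then expectation yields the lower bound.

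For the upper bound, I would build an explicit path from $0$ to $nx$ shell by shell. Without loss of generality assume $x = (x_1, x_2)$ with $x_1, x_2 \geq 0$, and set $n = \|nx\|_1$. At stage $k$ the path advances from $\partial D_k$ to $\partial D_{k+1}$ by selecting an edge on the cheapest line among a family of $\Theta(k)$ candidate lines — for instance the lines meeting the quadrant of the annulus $D_{k+1}\setminus D_k$ in the direction of $x$ — which should contribute an expected cost of order $\E[M_k]$ per stage. The constant $8$ would arise as a geometric overhead: up to $4$ quadrants to account for by symmetry, and an extra factor $2$ distinguishing horizontal from vertical lines.

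The hard part will be controlling the lateral transits in this construction. Reaching a cheap line chosen at stage $k$ from the current vertex on $\partial D_k$ may cost several extra edges; a naive bound on these transits yields terms of order $k\cdot\E[\tau]$ per stage, which need not be finite under our sole integrability hypothesis $\E[\min(\tau_1,\tau_2)]<+\infty$. The construction therefore has to be arranged so that lateral transits reuse lines already chosen for cheapness at previous stages, so that the extra cost can be absorbed into the existing sum $8\sum_{k=0}^{n-1} \E[M_k]$. The atomless assumption on $\tau$ should guarantee unique minimizing lines at each stage, which should simplify this bookkeeping.
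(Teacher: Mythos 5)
Your lower bound is correct and is essentially the paper's argument: every edge exiting the $L^1$-ball $D_k$ lies on one of the $4k+2$ i.i.d.\ lines meeting $D_k$, these exit edges are distinct for distinct $k$, and the bound passes to the infimum over paths; if anything, your write-up is more precise than the paper's.

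The upper bound, however, is not proved, and the missing piece is exactly the main idea of the paper's proof. Picking the cheapest among $\Theta(k)$ lines crossing the $k$-th shell indeed gives a crossing edge of expected cost $O(\E[M_k])$, but reaching that line from the current position generically requires a transit of length of order $k$, and even if that transit runs along a line that was itself a minimizer at an earlier stage, its cost is of order $k\,M_{\Theta(k)}$, whose expectation is \emph{not} $O(\E[M_k])$: when $F(t)\sim Ct^{\beta}$ one has $\E[M_k]\asymp k^{-1/\beta}$, so summing $k\,\E[M_{\Theta(k)}]\asymp k^{1-1/\beta}$ over $k\le \lVert nx\rVert_1$ gives order $n^{2-1/\beta}$ for $\beta>1$, far above the claimed $8\sum_k\E[M_k]\asymp n^{1-1/\beta}$. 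So a deterministic "reuse a previously cheap line" bookkeeping, which is all your sketch offers, cannot absorb the transits; a probabilistic ingredient is required, and your proposal does not contain it.

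What the paper does instead: let $X_m$ (resp.\ $Y_m$) be the abscissa of the cheapest vertical (resp.\ ordinate of the cheapest horizontal) line in $\llbracket0,m\rrbracket^2$, and follow the crossing point $(X_m,Y_m)$ as the box grows, setting $I_m=T(0,(X_m,Y_m))$. If neither minimizer changes between $m$ and $m+1$, the increment is zero; if one changes, the transit has length at most $m+1$ but runs along the cheapest lines themselves, and — this is the key point, resting on the atomless assumption — the event that the newly added boundary line becomes the minimizer has probability $\frac{1}{m+2}$ and is independent of the passage time of the orthogonal cheapest line along which one travels. This independence kills the factor $m+1$ and yields $\E[I_{m+1}-I_m]\le 2\E[M_m]$, hence $\E[I_m]\le 2\sum_{k<m}\E[M_k]$; running the symmetric construction from the endpoints of the two legs $0\to(nx_1,0)\to nx$ then produces the factor $8$. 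Your remark that transits should "reuse lines already chosen for cheapness" points in this direction, but without the rarity-plus-independence argument that compensates the transit length, the per-stage bound $O(\E[M_k])$ — and with it the whole upper bound — is unjustified.
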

 \begin{proof}[Proof of Theorem \ref{th:0}]
  Let us denote $X_{n}$ the random variable which is equal to the abscissa of the vertical edges with the smallest passage time in $\llbracket 0;n\rrbracket^{2}$ and $Y_{n}$ the random variable which is equal to the ordinate of the horizontal edges with the smallest passage time in $\llbracket 0;n \rrbracket^{2}$. We denote $I_{n}:=T(0,(X_{n},Y_{n}))$, with $I_0:=0$. Then by sub-additivity, we have:
$$
I_{n+1}-I_{n}\leq T((X_{n},Y_{n}),(X_{n+1},Y_{n+1})),
$$
which gives:
$$
\mathbb{E}[I_{n+1}-I_{n}]\leq \mathbb{E}[T((X_{n},Y_{n}),(X_{n+1},Y_{n+1}))].
$$

Now if $(X_{n+1},Y_{n+1})\neq (X_{n},Y_{n})$, we have:
$$
(X_{n+1},Y_{n+1})\in\{(X_{n},n+1);(n+1,Y_{n});(n+1,n+1)\}.
$$
Moreover, $X_{n}$ and $Y_{n}$ are independent and since the passage times are atomless we have:
$$
\P(X_{n+1}=n+1)=\P(Y_{n+1}=n+1)=\frac{1}{n+2}.
$$
Let $T_{n,n+1}:=T((X_{n},Y_{n}),(X_{n+1},Y_{n+1}))$. If $n\in \N$ we denote by $\tau_{v,n}$ the passage time associated with the vertical integer line of abscissa $n$ and  by $\tau_{h,n}$ the passage time associated with the horizontal integer line of ordinate $n$.  We have:
\begin{enumerate}
    \item If $(X_{n+1},Y_{n+1})=(X_{n},n+1)$, $T_{n,n+1}\leq (n+1)\tau_{v,X_{n}}$.
    \item If $(X_{n+1},Y_{n+1})=(n+1,Y_{n})$, $T_{n,n+1}\leq (n+1)\tau_{h,Y_{n}}$.
    \item If $(X_{n+1},Y_{n+1})=(n+1,n+1)$, $T_{n,n+1}\leq (n+1)\tau_{v,X_{n}}+(n+1)\tau_{h,n+1}$.
\end{enumerate}
Finally we have, since $X_n$ is independent of the horizontal passage times and $Y_n$ is independent of the vertical passage times:\\
 \begin{align*}
     \E[I_{n+1}-I_{n}]&\leq\E[T_{n,n+1}]\\
                      &=\E[T_{n,n+1}\mathds{1}_{(X_{n+1},Y_{n+1})=(X_{n},n+1)}]+ \E[T_{n,n+1}\mathds{1}_{(X_{n+1},Y_{n+1})=(n+1,Y_{n})}]\\&\;\;\;\;+\E[T_{n,n+1}\mathds{1}_{(X_{n+1},Y_{n+1})=(n+1,n+1)}]\\
                      &\leq \E[(n+1)\tau_{v,X_{n}}\mathds{1}_{(X_{n+1},Y_{n+1})=(X_{n},n+1)}]+\E[(n+1)\tau_{h,Y_{n}}\mathds{1}_{(X_{n+1},Y_{n+1})=(n+1,Y_{n})}]\\
                      &\;\;\;\;+\E[((n+1)\tau_{v,X_{n}}+(n+1)\tau_{h,n+1})\mathds{1}_{(X_{n+1},Y_{n+1})=(n+1,n+1)}]\\
                      &\leq\E[\tau_{v,X_{n}}\mathds{1}_{X_{n+1}=X_{n}}]+\E[\tau_{h,Y_{n}}\mathds{1}_{Y_{n+1}=Y_{n}}]+\E[\tau_{v,X_{n}}\mathds{1}_{X_{n+1}=n+1}]+\E[\tau_{h,n+1}\mathds{1}_{Y_{n+1}=n+1}]\\
                      &\leq2\E[M_{n}].
 \end{align*}
Hence, by summing, we obtain:
$$
\mathbb{E}[I_{n}]\leq 2 \sum_{k=0}^{n-1} \mathbb{E}[M_{k}].
$$
Moreover, to go from $0$ to $(n,0)$, we can construct with the same method a path from $(n,0)$ to $(X_{n},Y_{n})$. We can do the same computation and obtain that:
$$
    \mathbb{E}[T(0,(n,0))]\leq 2\mathbb{E}[I_{n}].
$$
Now, to go from $0$ to $x=(x_{1},x_{2})$, we can use the same method twice to go from $0$ to $(x_{1},0)$ and then from $(x_{1},0)$ to $(x_{1},x_{2})$. It follows that by telescoping we have:
\begin{align*}
    \E[T(0,nx)]&\leq 4\E[I_{\lVert nx\rVert_1 }]\leq8\sum_{k=0}^{\lVert nx\rVert_1-1}\E[M_{k}].
\end{align*}
For the lower bound we proceed differently. We note that to go from $0$ to $nx$, a path must cross all the deterministic $L^1$ balls of radius $k$ for $k$ between $0$ and $\lVert nx\rVert_1-1$. Thus, the passage time from $0$ to $nx$ is greater than the sum of the minimum passage times among the edges of the sphere of radius $k$ for $k$ ranging from $0$ to $\lVert nx\rVert_1-1$. Moreover, in dimension $2$, $4k+2$ independent edges originate from a vertex on the sphere of radius $k$. Thus, we have:
$$
\sum_{k=0}^{\lVert nx \rVert_1-1}\E[M_{4k+2}]\leq \E[T(0,nx)].
$$
\end{proof}
\begin{proof}[Proof of Theorem \ref{th:0tb}]
 To prove Theorem \ref{th:0tb}, we apply the estimate of Lemma \ref{lem:tpsborne} to Theorem \ref{th:0}.
\end{proof}
\section{Shape Theorem}\label{Section 4}

This section is devoted to the proof of Theorem \ref{th:forme}. First, by the definition of our model, $\frac{B_{t}}{t}\subset \frac{1}{a}\Diamond$ for all $t$. Only the first inclusion remains to be proven. To improve a pointwise convergence into a shape theorem, all usual proofs rely at some point on a uniform control of $T(x,y)$. To obtain such control, we prove here the analogue of the Difference-Estimate Lemma (Lemma 2.20, \cite{Survey}). This step presents a particular difficulty in the "Brochette" case.
This step follows the same strategy as the one developed in \cite{Cox} but presents a particular difficulty in the "Brochette" case. As
we will see, the dependence makes the construction of certain paths more delicate.
\begin{lem}\label{lem:unif}
Consider Brochette first-passage percolation and assume that we have the integrability condition \eqref{condition}. Then, there exists a constant $\kappa>0$ such that, for every $z\in\Z^{d}$,
$$
\P(\sup_{\substack{x\in\Z^{d}\\ x\neq z}}\frac{T(z,x)}{\lVert x-z \rVert_1}<\kappa)>0.
$$
\end{lem}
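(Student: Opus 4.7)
By translation invariance we take $z=0$; it suffices to show that $\sup_{x \neq 0} T(0,x)/\lVert x \rVert_1$ is almost surely finite, as then any sufficiently large $\kappa$ works. Following Cox--Durrett, the plan is to construct, for every $x \in \Z^d \setminus \{0\}$, a deterministic path $\gamma_x$ from $0$ to $x$ whose cost is $O(\lVert x \rVert_1)$ uniformly in $x$ with positive probability. Fix a threshold $K$ with $p := \P(\tau \leq K) \in (0,1)$ and call an integer line \emph{fast} if its passage time is at most $K$; in each axis direction the fast lines form an i.i.d.\ Bernoulli process with density $p$, so the distance from any integer to the nearest fast line is geometrically tailed.

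The path $\gamma_x$ has three parts: (i) a bounded \emph{start-up} near $0$ reaching the system of fast lines (cost an almost-surely-finite random constant depending only on the lines near $0$); (ii) a \emph{bulk} routed entirely along fast lines, of cost at most $K(\lVert x \rVert_1 + O(1))$; and (iii) a \emph{last mile} entering $x$ along one of its $d$ axis-parallel lines after a short perpendicular detour. The last-mile cost is bounded by
\[
W(x) \;=\; \min_{i=1,\dots,d}\,\Delta_i(x)\,\tau_i(x),
\]
where $\tau_i(x)$ is the passage time of the axis line through $x$ parallel to $e_i$, and $\Delta_i(x)$ is the distance from $x$ to the nearest fast line available to enter that axis line perpendicularly (excluding the $d$ lines through $x$ themselves). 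A key structural point is that the $2d$ random variables $\{\Delta_i(x), \tau_i(x)\}_{i=1,\dots,d}$ depend on pairwise disjoint families of integer lines and are therefore mutually independent.

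Using the elementary inequality $W(x) \leq Y(x) D(x)$ with $Y(x) := \min_i \tau_i(x)$ and $D(x) := \max_i \Delta_i(x)$, together with the independence of $Y$ from $D$, the hypothesis \eqref{condition} giving $\E[Y^d] < \infty$, and the geometric tails giving $\E[D^d] < \infty$, one obtains $\E[W(x)^d] < \infty$. Since $W(x) \overset{(d)}{=} W(0)$ by translation invariance and $\#\{x \in \Z^d : \lVert x \rVert_1 = n\} = O(n^{d-1})$, a comparison of sum to integral yields
\[
\sum_{x \neq 0} \P\!\bigl(W(x) > \kappa \lVert x \rVert_1\bigr) \;\leq\; \frac{C\, \E[W^d]}{\kappa^d},
\]
which is strictly less than $1$ for $\kappa$ large. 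On the resulting positive-probability event $W(x) \leq \kappa \lVert x \rVert_1$ for every $x \neq 0$; combined with the start-up and bulk bounds this gives the desired uniform bound. The main obstacle, precisely the particular difficulty in the Brochette case that the author highlights, is step (iii): a naive path going straight to $x$ along coordinate segments would traverse a potentially slow axis line through $x$ for the full approach, with unbounded speed. The perpendicular detour is what makes the last-mile cost factorize into the independent product $\Delta_i(x)\tau_i(x)$---with $\Delta_i(x)$ geometric hence small, and $\tau_i(x)$ the bad factor---and the independence of the $d$ entry directions is exactly what allows the $d$-th moment hypothesis \eqref{condition} to match the polynomial shell count $n^{d-1}$ in the union bound.
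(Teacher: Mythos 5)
Your argument is correct in substance but follows a genuinely different route from the paper's. The paper constructs $d+1$ offset staircase skeletons, replaces each step by the cheaper side of a thin envelope, uses the near-independence of envelopes within a skeleton (Lemma \ref{lem:envelope}) to get a variance bound and a Chebyshev estimate (Lemma \ref{lem:concentration}), and then exploits the independence of the $d+1$ bulk sums to make the deviation probability of order $\lVert x\rVert_1^{-(d+1)}$, summable over $\Z^d$; the endpoint contributions are handled separately through the $d$-th moment of $\tilde T$, which is where \eqref{condition} enters. You avoid all concentration machinery by exploiting the Brochette structure directly: a line that is cheap on one edge is cheap along its entire length, so the lines with $\tau\le K$ are deterministic highways and the bulk travels at speed at most $K$; the only unbounded contribution is the last mile near $x$, which you factor as $\min_i \Delta_i(x)\tau_i(x)$ with $\Delta_i$ geometric-tailed and independent of $\min_i\tau_i(x)$, so that \eqref{condition} yields a finite $d$-th moment exactly matching the shell count $n^{d-1}$ in the union bound. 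Your route is more elementary and makes the role of \eqref{condition} transparent (consistent with Proposition \ref{prop:condition}, which shows it is necessary); the paper's route is closer to the classical Cox--Durrett scheme and captures the cost per unit length as $\E[\tilde T]$ rather than the cruder constant $K$.

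Two points in your write-up need more care. First, the bulk cost is not $K(\lVert x\rVert_1+O(1))$ uniformly in $x$: the detours needed to switch between fast lines near $x$ are geometric-tailed random variables depending on $x$, so you should either fold them into the last-mile term or note that their exceedance probabilities over $\lVert x\rVert_1$ are summable and invoke Borel--Cantelli; this is a trivial repair but the $O(1)$ as stated is false. Second, and more substantially, for $d\ge 3$ the phrase \enquote{routed entirely along fast lines} hides real work: two fast lines in different directions intersect only if they agree in the remaining $d-2$ coordinates, so connecting the bulk network to the perpendicular entry line at $x$ requires a finite chain of fast perpendicular connections (in the spirit of the good-point construction in Lemma \ref{th:chimique}), with each search excluding the $d$ lines through $x$ so that the independence of $\min_i\tau_i(x)$ from the detour lengths is preserved. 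Your proof is complete as written essentially only for $d=2$; to be fair, the paper also details only $d=2$ and sketches the general case, but you should at least indicate this chaining step for $d\ge3$.
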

\begin{proof}
By shift invariance, it is sufficient to prove the result for $z=0$.
We provide all the details of the proof in the case $d=2$ and we will briefly explain at the end how to generalize it to an arbitrary $d$.
Set $x=(x_1,x_2)$. Without loss of generality, we assume that $x_1\geq x_2\geq 0$. Similarly to classical proofs, the estimates will rely on the construction of paths from $0$ to $x$. 
\begin{itemize}
    \item We first construct $(d+1)=3$ paths $\Gamma_1, \Gamma_2, \Gamma_3$ that primarily use edges whose passage times are independent of most of the edges in the other paths.
    \item Then for $1\leq i\leq 3$, we will bound $\mathrm{Var}(T(\Gamma_i))$. To do this, we will decompose each $\Gamma_i$ into several segments, which we will call steps, and most of these are independent.
    \item In the last step of the proof, we exploit the fact that $\Gamma_1, \Gamma_2$ and $\Gamma_3$ are mostly independent in order to control $T(0,x)$.
\end{itemize}

\paragraph {Construction of paths $\Gamma_{1},\Gamma_2,\Gamma_3$.}(see Figure \ref{Figure 3}). We begin by defining a deterministic path  $S_{1}$ from $0$ to $x$, which we will refer to as the skeleton. It is composed of three staircases: it descends and then ascends to $(x_1-x_2,0)$ before ending at $(x_1,x_2)$. Formally, if $p=\lfloor \frac{x_1-x_2}{15}\rfloor$ and $p'=\lfloor\frac{x_2}{15}\rfloor$, $S_1$ is the path that passes through the following points:
\begin{align*}
&x^{1}_0=(0,0)\rightarrow x^{1}_1=(0,-15)\rightarrow x^{1}_2=(15,-15)\rightarrow\dots\rightarrow x^{1}_{2\lceil \frac{p}{2}\rceil}=(\lceil\frac{p}{2}\rceil 15,-\lceil\frac{p}{2}\rceil 15)\\&
\rightarrow x^{1}_{2\lceil \frac{p}{2}\rceil+1}=(\lceil\frac{p}{2}\rceil 15,-(\lceil\frac{p}{2}\rceil-1)15)\rightarrow x^{1}_{2\lceil \frac{p}{2}\rceil+2}=((\lceil\frac{p}{2}\rceil+1) 15,-(\lceil\frac{p}{2}\rceil-1)15)\\&
\rightarrow\dots\rightarrow x^{1}_{m_1}=(x_1-x_2,0)\rightarrow x^{1}_{m_1+1}=(x_1-x_2,15)\\&\rightarrow x^{1}_{m_1+2}=(x_1-x_2+15,15)\rightarrow\dots\rightarrow x^{1}_{m_1+n_1}=(x_1,x_2),
\end{align*}
and such that between two consecutive points $x^{1}_i$, it follows the straight line. Note that $x^{1}_i$ and $x^{1}_{i+1}$ are almost always at most 15 edges apart. Depending on the value of $p$ and $p'$, $m_1\in \{4\lceil \frac{p}{2}\rceil-1;4\lceil \frac{p}{2}\rceil\}$ and $n_1\in\{2p';2p'+2\}$. Similarly, we construct two other skeletons with the same shape but slightly offset from $S_1$ for independence reasons that we will see later in the proof.
\begin{itemize}
\item $S_2$ goes through:\\
     $x^{2}_0=(0,0)\rightarrow x^{2}_1=(0,-10)\rightarrow x^{2}_2=(10,-10)\rightarrow x^{2}_3=(10,-25)\rightarrow\dots\rightarrow x^{2}_{2\lceil \frac{p}{2}\rceil}=(10+(\lceil\frac{p}{2}\rceil-1) 15,-10-(\lceil\frac{p}{2}\rceil-1) 15)\rightarrow x^{2}_{2\lceil \frac{p}{2}\rceil+1}=(10+(\lceil\frac{p}{2}\rceil-1) 15,-10-(\lceil\frac{p}{2}\rceil-2) 15)\rightarrow\dots\rightarrow x^{2}_{4\lceil \frac{p}{2}\rceil-1}=((2\lceil\frac{p}{2}\rceil-1)15-5,0)\rightarrow x^{2}_{4\lceil \frac{p}{2}\rceil=:m_2}=(x_1-x_2,0)
     \rightarrow x^{2}_{m_2+1}=(x_1-x_2,10)\\\rightarrow x^{2}_{m_2+2}=(x_1-x_2+10,10)\rightarrow x^{2}_{m_2+3}=(x_1-x_2+10,10+15)\rightarrow x^{2}_{m_2+4}=(x_1-x_2+10+15,10+15)\rightarrow\dots\rightarrow x^{2}_{m_2+n_2}=(x_1,x_2)$,\\
     with $n_2\in\{2p'+2;2p'+4\}.$
      \item $S_3$ goes through:\\
     $x^{3}_0=(0,0)\rightarrow x^{3}_1=(0,-5)\rightarrow x^{3}_2=(5,-5)\rightarrow x^{3}_3=(5,-20)\rightarrow\dots\rightarrow x^{3}_{2\lceil\frac{p}{2}\rceil}=(5+(\lceil\frac{p}{2}\rceil-1) 15,-5-(\lceil\frac{p}{2}\rceil-1) 15)\rightarrow x^{3}_{2\lceil\frac{p}{2}\rceil+1}=(5+(\lceil\frac{p}{2}\rceil-1) 15,-5-(\lceil\frac{p}{2}\rceil-2) 15)\rightarrow\dots\rightarrow x^{3}_{4\lceil\frac{p}{2}\rceil-1}=((2\lceil\frac{p}{2}\rceil-1)15-10,0)\rightarrow x^{3}_{4\lceil\frac{p}{2}\rceil=:m_3}=(x_1-x_2,0)
     \rightarrow x^{3}_{m_3+1}=(x_1-x_2,5)\\\rightarrow x^{3}_{m_3+2}=(x_1-x_2+5,5)\rightarrow x^{3}_{m_3+3}=(x_1-x_2+5,5+15)\rightarrow x^{3}_{m_3+4}=(x_1-x_2+5+15,5+15)\rightarrow\dots\rightarrow x^{3}_{m_3+n_3}=(x_1,x_2)$,\\
     with $n_{3}\in\{2p'+2;2p'+4\}.$
     \end{itemize}
A maximal sequence of consecutive edges in the same direction within $S_\ell$ is called a \emph{step}. (We see from the construction that steps consist of at most 39 edges but most often contain only 15.) 
\begin{figure}
\centerline{\includegraphics[scale=0.3]{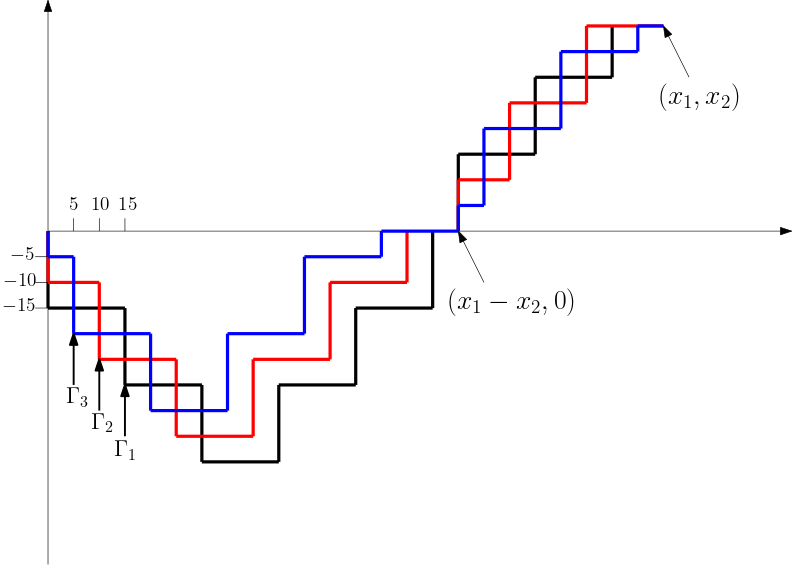}}~
\caption{Paths $\Gamma_1,\Gamma_2,\Gamma_3$.}
\label{Figure 3}
\end{figure}\\
\begin{defi}
 For $1\leq \ell\leq 3$ let $s$ be a step of $S_\ell$.  Assume $s$ is a horizontal (resp. vertical) step. Let $a=(a_1,a_2)$ and $b=(b_1,a_2)$ (resp. $a=(a_1,a_2)$ and $b=(a_1,b_2))$ be the endpoints of $s$.  Then the \emph{envelope} of $s$ is  defined by the set of edges forming the borders of the rectangle with vertices
$$(a_1,a_2),(b_1,a_2),(b_1,a_2+1),(a_1,a_2+1)$$
(resp.$$(a_1,a_2),(a_1+1,a_2), (a_1+1,b_2),(a_1,b_2)).$$
\end{defi}
In the above definition, if $a$ and $b$ are the vertices $x^{\ell}_i $ and $x^{\ell}_{i+1}$ in the traversal of $S_\ell$, the envelope of the step $a,b$ is referred to as the $i$-th envelope.
\begin{lem}\label{lem:envelope}
    Let $S_\ell$ be a skeleton and $E$ the envelope of a step of $S_\ell$. Then $\{\tau_{e},e\in E\}$ is independent of $\{\tau_{e},e\in E'\}$ for all envelope $E'$ of $S_\ell$ but at most $10$ other envelopes $E'$.
\end{lem}
\begin{proof}[Proof of lemma \ref{lem:envelope}]
    If the passage times of the edges in an envelope depend on those of $E$, then this envelope must necessarily touch one of the two strips defined by the sides of $E$ (see Figure \ref{Figure 4}). Furthermore, based on the construction of our paths and by looking at Figure \ref{Figure 4}, we can see that there are at most ten such envelopes.
\begin{figure}
\centerline{\includegraphics[scale=0.25]{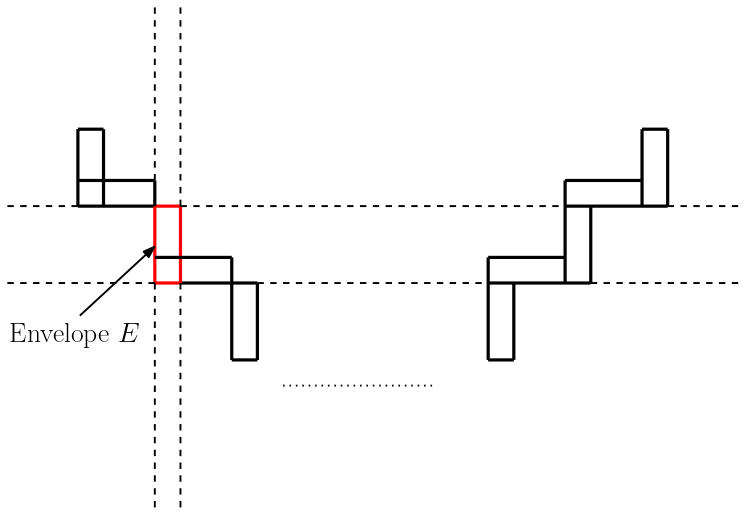}}~
\caption{Envelopes which are dependent of the red one.}
\label{Figure 4}
\end{figure}\\
\end{proof}
We are now ready to build $\Gamma_\ell$. It is a random path connecting $0$ to $x$ and contained in the set of envelopes of the steps of $S_\ell$. Let $\gamma^{\ell}_i$ be the path with the shortest passage time among the two paths connecting $x_i^{\ell}$ and $x_{i+1}^{\ell}$ in the $i$-th envelope of $S_\ell$.
Then we define $\Gamma_\ell$ as the concatenation\footnote{It is possible that the path $\Gamma_\ell$ passes through the same vertex several times.} of $\gamma^{\ell}_1,\gamma^{\ell}_2,\dots,\gamma^{\ell}_{k_{\ell}}$ where $k_{\ell}=m_{\ell}+n_{\ell}$ is the number of steps of $S_\ell$.
Notice that the distribution of $T(\gamma^{\ell}_{i})$ only depends on the number of edges of the $i$-th step of $S_\ell$. We can then choose $\tilde{T}$ such that $\tilde{T}\overset{(d)}{=}\min(39\tau_1,\tau_2+39\tau_3+\tau_4)$.
 Thus, for all $i$ and $\ell$, $\tilde{T}$ stochastically dominates $T(\gamma^{\ell}_{i})$. 
\begin{lem}\label{lem:concentration}
    For $1\leq \ell \leq 3$ and $1\leq i\leq k_\ell$ set $T_i^{\ell}=T(\gamma_i^{\ell})$.
    Then for all $1\leq k\leq k_\ell$ there exists $C>0$ such that we have:
    $$
\P(\sum_{i=1}^{k}T_{i}^{\ell}>k(\E[\tilde{T}]+1))\leq \frac{C}{k}.
    $$
\end{lem}
\begin{proof}
    Using straightforward calculation (see  eq (2.3) in \cite{Survey}) and thanks to Assumption \eqref{condition}, for all $i$ and $l$, $\mathrm{Var}(T^\ell_i)<C'$ for some $C'$ which does not depend on $k,i$ and $\ell$. Using Lemma \ref{lem:envelope}, we have:
    $$
   \mathrm{Var}(\sum_{i=1}^{k}T_i^\ell) =\sum_{1\leq i,j\leq k}\mathrm{Cov}(T_i^\ell,T_j^\ell)\leq \sum_{i=1}^{k} 10 \max_j \mathrm{Cov}(T_i^\ell,T_j^\ell).
$$
Moreover, if $\mathrm{Cov}(T_i^\ell,T_j^\ell)\neq0$, then we have, according to the Cauchy-Schwarz inequality:
$$
\mid \mathrm{Cov}(T_i^\ell,T_j^\ell)\mid
\leq \sqrt{\text{Var}(T_i^\ell)\text{Var}(T_j^\ell)}\
\leq C'.
$$
Consequently, we have:
$$
\mathrm{Var}(\sum_{i=1}^{k}T_i^\ell)\leq 10kC'.
$$
Thus, since by definition $\E[\tilde{T}]\geq\E[T_i^\ell]$ for all $1\leq i\leq k$ we have thanks to Tchebytchev inequality:
\begin{align*}
\P(\sum_{i=1}^{k} T_{i}>k(\E[\tilde{T}]+1))\leq \P(\sum_{i=1}^{k} T_i^\ell>\sum_{i=1}^{k} \E[T_i^\ell]+k)&\leq\frac{10}{k}C'.
\end{align*}
Denoting $C=10C'$, the result follows.
\end{proof}
In order to conclude the proof, we need to control $T(0,x)$ using the independence of $\Gamma_1, \Gamma_2$ and $\Gamma_3$.
By construction, $\Gamma_1, \Gamma_2$ and $\Gamma_3$ involve independent envelopes, except in the neighborhoods of $0$, $(x_1-x_2,0)$ and $x$. In order to exploit this rigorously we introduce a few notation:
\begin{itemize}
\item $U_{\ell}=\sum_{i=3}^{m_\ell-2}T_{i}^{\ell}+\sum_{i=m_\ell+3}^{k_\ell-2}T_{i}^{\ell}$
\item $R=\max_{1\leq \ell\leq 3}(T_{1}^{\ell}+T_{2}^{\ell}+\sum_{i=m_\ell-1}^{m_\ell+2}T^{\ell}_{i}+T_{k-1}^{\ell}+T_{k}^{\ell}).$
\end{itemize}
We note that in this way, the $U_\ell$ are independent. Indeed, by construction, the envelopes of $S_\ell$ on which $U_\ell$ depends consist of edges located on integer lines distinct from those forming the envelopes of $S_\ell'$ on which $U_\ell'$ depends, for all distinct $\ell,\ell'\in\{1;2;3\}$.
Moreover, we have thanks to the union bound:

$$
\P(R>\lVert x \rVert_1)\leq 24 \P(\tilde{T}>\frac{\lVert x \rVert_1}{8}).
$$
Additionally, we notice that $T(0,x)\leq R+\min_{1\leq \ell \leq 3}U_{\ell}$. Therefore, we have with the union bound:

\begin{align*}
\P(T(0,x)>(\E[\tilde{T}]+2)\lVert x \rVert_1)&\leq \P(R>\lVert x \rVert_1)+ \P(\min_{1\leq \ell\leq 3}U_{\ell}>(\E[\tilde{T}]+1)\lVert x \rVert_1)\\& \leq 24 \P(\tilde{T}>\frac{\lVert x \rVert_1}{8})+\P(\min_{1\leq \ell\leq 3}U_{\ell}>(\E[\tilde{T}]+1)\lVert x \rVert_1).
\end{align*}

Now according to Lemma \ref{lem:concentration} $\P(U_{\ell}>k_\ell(\E[\tilde{T}]+1))\leq \frac{C}{k_\ell}$.
Thus, by the independence of the $U_{\ell}$, we have, since each path is composed of less than $\lVert x \rVert_1$ steps but more than $\frac{\lVert x \rVert_1}{15}$:
$$
\P(T(0,x)>(\E[\tilde{T}]+2)\lVert x \rVert_1)\leq 24 \P(\tilde{T}>\frac{\lVert x \rVert_1}{8})+ (15C)^{3}\lVert x \rVert_1^{-3}.
$$
Thus, if $K=\E[\tilde{T}]+2$, 
$$
\sum_{x\in\Z^2}\P(\frac{T(0,x)}{\lVert x \rVert_1}>K)\leq 24 \sum_{x\in\Z^2}\P(\tilde{T}>\frac{\lVert x \rVert_1}{8})+ (15C)^{3}\sum_{x\in\Z^2}\lVert x \rVert_1^{-3}.
$$
Thanks to Assumption \eqref{condition}, $\E[\tilde{T}^d]=\E[\tilde{T}^2]<+\infty$ and so $\sum_{x\in\Z^2}\P(\tilde{T}>\frac{\lVert x \rVert_1}{8})<+\infty$.

Therefore, since the series of $\lVert x \rVert_1^{-3}$ over $\Z^{2}$ converges, according to the Borel-Cantelli Lemma there exists $\kappa>0$ such that for all $z\in\Z^{2}$
$$
\P(\forall x\in\Z^{2}\setminus \{z\}, \frac{T(z,x)}{\lVert z-x \rVert_1}<\kappa)>0.
$$
\end{proof}
Everything we have done here can be generalized in dimension $d\geq3$. Indeed the biggest difference is that we need to construct $d+1$ paths. This construction is achievable by simply spacing out the paths and moving along orthogonal planes to maintain as much independence as possible. Moreover, we then consider $d$-dimensional envelopes and denote $\gamma_i^\ell$ the path with the shortest passage time among the $d$ paths connecting $x_i^\ell$ and $x_{i+1}^\ell$ in the $i$-th  $d$-dimensional envelope of $S_\ell$. Thanks to Assumption \eqref{condition}, we have in particular $\E[\min(\tau_1,\dots,\tau_d)^2]<+\infty$, so $\mathrm{Var}(T_i^\ell)$ are still bounded and with obvious notations, we get $\P(\min_{1\leq\ell\leq d+1}U_\ell>(\E[\tilde{T}]+1)\lVert x \rVert_1)\leq C'\lVert x \rVert_1^{-(d+1)}$. Finally, using that $\E[\min(\tau_1,\dots,\tau_d)^d]<+\infty$, we also get that $\sum_{x\in\Z^d}\P(\tilde{T}>\frac{\lVert x \rVert_1}{8})<+\infty$.
\begin{proof}[Proof of Theorem \ref{th:forme}]
We can now use Lemma \ref{lem:unif} to prove Theorem \ref{th:forme} by following the proof in \cite{Survey}.
The translation $f$ with vector $\zeta\in\Z^d$ is not ergodic when $\zeta$ in on one of the axes.  
However, 
ergodicity outside the axes is sufficient to prove Theorem \ref{th:forme}.
Indeed, by carefully reading the proof of  Theorem $2.16$ in \cite{Survey}, just after formula $(2.11)$, we see that ergodicity along directions arbitrarily close to the axes is sufficient. 
The result follows. 
\end{proof}
\begin{prop}\label{prop:condition}
    Assumption \eqref{condition} is necessary for $B_t$ to have a limiting shape. More precisely if \;$\E[\min(\tau_{1}^{d},...,\tau_{d}^{d})]=+\infty$ then for any $K>0$,
    $$
     \frac{T(0,x)}{\lVert x \rVert_1}>K \text{ for infinitely many }x\in \Z^d, a.s.
    $$
\end{prop}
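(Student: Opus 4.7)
The plan is to exploit the bound $T(0,x)\geq M_x$, where $M_x:=\min_{j=1,\ldots,d}\tau_{\delta_j^x}$ is the minimum passage time among the $d$ integer lines through $x$. Indeed any path from $0$ to $x\neq 0$ ends in an edge incident to $x$, and that edge lies on one of those $d$ lines; moreover $M_x$ has the same distribution as $Y:=\min(\tau_1,\ldots,\tau_d)$. It therefore suffices to show that $A_x:=\{M_x>K\lVert x\rVert_1\}$ occurs for infinitely many $x\in\Z^d$ almost surely, for every fixed $K>0$.

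Grouping $x$ by $n=\lVert x\rVert_1$, one gets $\sum_{x\neq 0}\P(A_x)\asymp\sum_n n^{d-1}\P(Y>Kn)\asymp K^{-d}\E[Y^d]=+\infty$. A direct Borel--Cantelli is not available because the $A_x$ are not mutually independent: two vertices at Hamming distance $1$ share one of the $d$ lines defining them. I would therefore apply Kochen--Stone together with a second moment analysis. The structural fact I rely on is that $M_x$ and $M_{x'}$ are independent as soon as $x,x'$ differ in at least two coordinates, because the $d$ lines through $x$ are parametrized by the $d-1$ coordinates of $x$ complementary to each direction. In particular, on the simplex face $V_n:=\{x\in\Z_{\geq 0}^d:\sum_i x_i=n\}$ any two distinct vertices have Hamming distance $\geq 2$ (otherwise the sum constraint would force equality), so $(M_x)_{x\in V_n}$ is an i.i.d.\ family of copies of $Y$ of cardinality $\asymp n^{d-1}$.

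Applied to the full family $(A_x)_{x\in\Z_{\geq 0}^d\setminus\{0\}}$, Kochen--Stone gives
\[
\P(A_x\text{ i.o.})\geq\limsup_{R\to+\infty}\frac{\bigl(\sum_{\lVert x\rVert_1\leq R}\P(A_x)\bigr)^2}{\sum_{\lVert x\rVert_1,\lVert x'\rVert_1\leq R}\P(A_x\cap A_{x'})}.
\]
In the denominator, pairs at Hamming distance $\geq 2$ are independent and contribute at most the square of the numerator; pairs at Hamming distance $1$ share exactly one line $\delta$, and a direct computation yields $\P(A_x\cap A_{x'})=\rho(K\max(\lVert x\rVert_1,\lVert x'\rVert_1))^d\,\rho(K\min(\lVert x\rVert_1,\lVert x'\rVert_1))^{d-1}$ with $\rho:=1-F$. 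The main obstacle is to show that this Hamming-$1$ contribution is negligible compared to $(\sum_x \P(A_x))^2$; swapping the order of summation reduces it to $\sum_{n'}\rho(Kn')^d\sum_{n<n'}n^{d-1}\rho(Kn)^{d-1}$, which I would control using the monotonicity of $\rho$ together with the elementary observation that $\E[Y^d]=+\infty$ forces $\E[\tau]=+\infty$ (for a decreasing $\rho$, $\E[\tau]<+\infty$ gives $\rho(t)=O(1/t)$, hence $t^{d-1}\rho(t)^d=O(\rho(t))$, hence $\E[Y^d]<+\infty$).

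Once Kochen--Stone provides $\P(\{A_x\}\text{ i.o.})>0$, I conclude with a $0$--$1$ argument: the event $\{A_x\text{ i.o.}\}$ is invariant under the $\Z^d$-translation action on $(\tau_\delta)_{\delta\in\Delta}$, and this action is ergodic since a lattice shift transverse to any given line direction induces a non-trivial translation on the $\Z^{d-1}$-index of the lines in that direction. Hence the probability equals $1$, which combined with $T(0,x)\geq M_x$ yields the proposition.
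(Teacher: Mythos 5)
Your setup is sound and in fact mirrors the paper's own strategy: bound $T(0,x)$ from below by the minimum passage time over the $d$ lines through $x$, note that the events $A_x$ are independent unless $x,x'$ differ in at most one coordinate, check that $\sum_x\P(A_x)\asymp\sum_n n^{d-1}\rho(Kn)^d$ diverges exactly under $\E[\min(\tau_1,\dots,\tau_d)^d]=+\infty$, and run a second-moment Borel--Cantelli. (The paper works on the cone $\{z_2,\dots,z_d\le z_1\}$ with threshold $dKz_1$ and invokes a Borel--Cantelli variant that yields probability one as soon as the correlation ratio tends to $1$, so it never needs an ergodicity step. Also, your event $\{A_x\ \mathrm{i.o.}\}$ for a fixed $K$ is not literally shift-invariant, since the threshold $K\lVert x\rVert_1$ is anchored at the origin; this is repairable by a standard enlargement, e.g.\ proving positive probability at level $2K$ and applying the $0$--$1$ argument to the invariant event $\bigcup_{c>0}\{M_y>K\lVert y\rVert_1-c\ \mathrm{i.o.}\}$, but as written that claim has a small hole.)

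The genuine gap is precisely at the point you call ``the main obstacle'': controlling the Hamming-distance-$1$ contribution $\sum_{n'}\rho(Kn')^d\sum_{n<n'}n^{d-1}\rho(Kn)^{d-1}$ against $S_N^2$, where $S_N:=\sum_{n\le N}n^{d-1}\rho(Kn)^d$. The tools you propose (monotonicity of $\rho$ plus the observation that $\E[Y^d]=+\infty$ forces $\E[\tau]=+\infty$) do not deliver this. Monotonicity only lets you trade $\rho(Kn')^d\rho(Kn)^{d-1}\le\rho(Kn')^{d-1}\rho(Kn)^d$, which leads to a bound of the form $\bigl(\sum_{n'\le N}\rho(Kn')^{d-1}\bigr)S_N$, and $\sum_{n'}\rho(Kn')^{d-1}$ can be vastly larger than $S_N$: for $d=2$ and $\rho(t)\asymp t^{-1}(\log t)^{-1/2}$ one has $S_N\asymp\log\log N$ while $\sum_{n\le N}\rho(Kn)\asymp(\log N)^{1/2}$, so that bound is useless even though the true cross term is $O(1)$ in this example; knowing $\sum_n\rho(Kn)=+\infty$ helps in no way. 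What is actually needed --- and what constitutes the technical heart of the paper's proof of this proposition --- is a H\"older/Cauchy--Schwarz estimate on the inner sum, namely $\sum_{n<n'}n^{d-1}\rho(Kn)^{d-1}\le\bigl(\sum_{n<n'}n^{d-1}\rho(Kn)^d\bigr)^{(d-1)/d}\bigl(\sum_{n<n'}n^{d-1}\bigr)^{1/d}\lesssim n'\,S_{n'}^{(d-1)/d}$, which after summation (using $n'\le n'^{d-1}$) gives a cross term $O\bigl(S_N^{2-1/d}\bigr)=o(S_N^2)$ since $S_N\to\infty$. With that estimate your argument closes --- and then the sharp form of the second-moment Borel--Cantelli already gives probability one, making the ergodicity step superfluous; without it, the proof is incomplete at its decisive step.
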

\begin{proof}
Recall that $a$ is the infimum of the support of the distribution of the passage times. Let $K>a$. Let $Q=\{z=(z_1,\dots,z_d)\in\N^d \text{\;such that \;}z_2,\dots,z_d\leq z_1\}$. Let for all $z\in Q$, $A_{z}:=\{\forall i\in \llbracket 1;d \rrbracket, \tau_{z+e_i}\geq dz_1K\}$. If $A_z$ occurs, $T(0,z)\geq K\lVert z \rVert_1$. Then if for any $K>a$ an infinite number of $A_z$ occur, there is no shape theorem.
To prove that  this is the case with probability 1 if $\E[\min(\tau_{1}^{d},\dots,\tau_{d}^{d})]<+\infty$, we use the following Borel-Cantelli lemma:
\begin{lem}\label{lem:BC}\textbf{\cite{Lem:BC} Lemma C).}
Let $(A_n)_{n\geq1}$ be such that $\sum_{n=1}^{+\infty}\P(A_n)=+\infty$ and such that:
            $$
\limsup_{n\rightarrow+\infty}\frac{(\sum_{i=1}^{n}\P(A_i))^{2}}{\sum_{1\leq i,j\leq n}\P(A_i\cap A_j)}=1.
            $$
            Then $\P(\limsup A_n)=1$.
        \end{lem}

In our case, we want to prove that
$\limsup_{n\rightarrow+\infty}\frac{(\sum_{z\in Q\cap\llbracket 1:n \rrbracket^d}\P(A_z))^2}{\sum_{z,z'\in Q\cap\llbracket 1:n \rrbracket^d}\P(A_z\cap A_z')}=1.$
We have $\P(A_z)=p_{z_1}^{d}$, where we put $p_i:=\P(\tau\geq dKi)$. 
    Let 
    $$
    V_n:=(\sum_{z\in Q\cap\llbracket 1;n \rrbracket^{d}}\P(A_z))^{2}=(\sum_{i=1}^{n}i^{d-1}p_i^{d})^{2}.
    $$
    Now by assumption $\E[\min(\tau_1,\dots,\tau_d)^{d}]=+\infty$ and by series-integral comparison there exists a constant $R>0$  such that
    $\sum_{i=1}^{+\infty}i^{d-1}p_i^{d}\geq R\E[\min(\tau_1,\dots,\tau_d)^{d}]$.
    So 
    $$
    \sum_{i=1}^{+\infty}i^{d-1}p_i^{d}=+\infty.
    $$
    Let $z=(z_1,\dots,z_d), z'=(z'_1,\dots,z'_d)\in Q$ and let us denote $I(z,z'):=\{j\in\llbracket 1;d \rrbracket:z_j=z'_j\}$. We have:

$\P(A_z\cap A_{z'})=
\begin{cases}
    
    &\P(A_z)\P(A_{z'}) \;\;\; if\mid I \mid=0\\ 
   & p_{z_1}^{2d-k} \;\;\;if \mid I \mid=k, k>0,1\in I\\
    & p_{\min(z_1,z_1')}^{d-k}p_{\max(z_1,z_1')}^{d}\;\;\; if
    \mid I \mid=k, k>0,1\notin I.
\end{cases}
$
\\
    Then:
    \begin{align*}
    \sum_{\substack{z,z'\in Q\cap\llbracket 1;n\rrbracket^{d}}}
    \P(A_z\cap A_{z'})&\leq\sum_{\substack{z,z'\in Q\cap \llbracket 1;n\rrbracket^{d}\\ \mid I \mid=0}}\P(A_z)\P(A_{z'})+\sum_{k=1}^{d}\sum_{\substack{z,z'\\\mid I(z,z')\mid=k\\ 1\in I(z,z')}}p_{z_1}^{2d-k}+\sum_{k=1}^{d-1}\sum_{\substack{z,z'\\\mid I(z,z') \mid=k\\1\notin I(z,z')}}p_{\max(z_1,z_1')}^{d}p_{\min(z_1,z_1')}^{d-k}\\
 &\leq\underbrace{\sum_{\substack{z,z'\in Q\cap \llbracket 1;n\rrbracket^{d}\\ \mid I \mid=0}}\P(A_z)\P(A_{z'})}_{=:S_{1,n}}+\sum_{k=1}^d 2\binom{d-1}{k-1} \underbrace{\sum_{z_1=1}^n z_1^{2d-k-1}p_{z_1}^{2d-k}}_{=:S_{2,k,n}}\\
 &\;\;\;\;+ \sum_{k=1}^{d-1}2\binom{d-1}{k}\underbrace{\sum_{z_1=1}^n z_1^{d-1}p_{z_1}^{d-k}\sum_{z_1'=z_1+1}^n 
    z_1'^{d-k-1} p_{z_1'}^d}_{=:S_{3,k,n}}.
    \end{align*}
    We will now estimate each part of the right-hand term.
    First, we notice that $S_{1,n}\leq V_n$.
     Let us first estimate $S_{2,k,n}$ for $k\in\llbracket 1;d \rrbracket$. First we write:
     $$
S_{2,k,n}=\sum_{i=1}^{n}i^{2d-k-1}p_i^{2d-k}=\sum_{i=1}^{n}i^{d-1}p_i^{d}(ip_i)^{d-k}.
     $$
     Let $k\in\llbracket 0;d-1 \rrbracket$ and $i\in\N^{\ast}$ there exists a constant $C_k>0$ such that:
     \begin{align*}
         i(ip_i)^{k}&=i^{k+1}p_i^{k}\\
                    &\leq \left(\sup_{i_0\in\N}\frac{i_0^{k+1}}{\sum_{j=1}^{i_0}j^k}\right)\sum_{j=1}^{i}j^{k}p_j^{k}\text{ because }(p_i) \text{ is a decreasing sequence }\\
                    &=C_k\sum_{j=1}^{i} j^{\frac{k}{d}}j^{\frac{k(d-1)}{d}}p_j^k\\
                    &\leq C_k (\sum_{j=1}^{i} j^{\frac{k}{d-k}})^{\frac{d-k}{d}}(\sum_{j=1}^{i}j^{d-1}p_j^{d})^{\frac{k}{d}} \text{ according to Hölder's inequality}\\
                    &\leq C_k i^{(\frac{k}{d-k}+1)\frac{d-k}{d}}(\sum_{j=1}^{i}j^{d-1}p_j^{d})^{\frac{k}{d}}\\
                    &= C_k i(\sum_{j=1}^{i}j^{d-1}p_j^d)^\frac{k}{d}.
     \end{align*}
      We then have:
     \begin{equation*}\label{espinfbis}
     \forall k\in\llbracket 0;d-1 \rrbracket,\; (ip_i)^{k}\stackrel{i\to+\infty}{=}o(\sum_{j=1}^{i}j^{d-1}p_j^{d}).
     \end{equation*}
     It follows that we have:
     \begin{equation*}\label{espinf}
     \forall k\in\llbracket 1;d \rrbracket,\; (ip_i)^{d-k}\stackrel{i\to+\infty}{=}o(\sum_{j=1}^{i}j^{d-1}p_j^{d}).
     \end{equation*}
 Since $\sum_{j=1}^{i}j^{d-1}p_j^{d}\leq \sum_{j=1}^{n}j^{d-1}p_j^{d}$, we conclude that:
$$
\forall k\in \llbracket 1;d \rrbracket, S_{2,k,n}\stackrel{n\to+\infty}{=} o(V_n).
$$
Let us now estimate $S_{3,k,n}$. We see that :
$$
S_{3,k,n}=\sum_{i=1}^{n}i^{d-1}p_i^{d-k}\sum_{j=i+1}^{n}j^{d-k-1}p_{j}^{d}=\sum_{j=2}^{n}j^{d-k-1}p_{j}^{d}\sum_{i=1}^{j-1}i^{d-1}p_{i}^{d-k}.
$$
Now we have:

\begin{align*}
    \sum_{i=1}^{j-1}i^{d-1}p_{i}^{d-k}&=  \sum_{i=1}^{j-1} i^{\frac{(d-1)k}{d}}p_i^{d-k}i^{\frac{(d-1)(d-k)}{d}}\\
                        & \leq (\sum_{i=1}^{j-1} i^{\frac{d}{k}\frac{(d-1)k}{d}})^{\frac{k}{d}}( \sum_{i=1}^{j-1}p_i^{(d-k)\frac{d}{d-k}}i^{\frac{(d-1)(d-k)}{d}\frac{d}{d-k}})^{\frac{d-k}{d}}\text{ thanks to Hölder's inequality}\\
                        &\leq C_k j^{\frac{(d-1)k}{d}+\frac{k}{d}}(\sum_{i=1}^{j-1}i^{d-1}p_i^{d})^{\frac{d-k}{d}}\\
                        &\leq C_k j^k(\sum_{i=1}^{j-1}i^{d-1}p_i^{d})^{\frac{d-k}{d}}.
\end{align*}
Then by summing we finally have that:
$$
\forall k \in \llbracket 1;d \rrbracket, S_{3,k,n}\stackrel{n\to +\infty}{=}o(V_n).
$$
To conclude, since $S_{1,n}\leq V_n \text{\;and\;} S_{2,k,n}+S_{3,k,n}\stackrel{n\to+\infty}{=} o(V_n)$, we have:
$$
\limsup_{n\rightarrow+\infty}\frac{V_n}{S_{1,n}+S_{2,k,n}+S_{3,k,n}}\geq 1.
$$
Thanks to Lemma \ref{lem:BC}, Proposition \ref{prop:condition} is proven.
\end{proof}
\section{Brochette First-Passage Percolation with infinite passage times}\label{Section 5}
In this section, we allow our passage times to be infinite:
 $$
\P(\tau=+\infty)>0.
 $$
 The proof used here follows the same outline as \cite{Theret} by Cerf and Théret. They proved similar results in the context of classical first-passage percolation. Their idea is to introduce a regularized notion of passage times.
 For any $M>a$, we consider the set of edges whose passage time is strictly less than $M$. The cluster formed by these edges and their endpoints is denoted by $\mathcal{C}_M$. It is easy to see that $\mathcal{C}_M$ is infinite, connected, and composed of integer lines.
 Moreover, we can then define, for all $x,y\in\mathbb{Z}^{d}$, $\Tilde{T}_{M}(x,y)=T(\tilde{x}_{M},\tilde{y}_{M})$ where $\tilde{x}_{M}$ is the point in the cluster $\mathcal{C}_{M}$  closest to $x$ in $L^{1}$ norm with an arbitrary rule to break ties.
 In the same way, we define $\mathcal{C}_\ast$ the cluster $\mathcal{C}_M$ with $M=+\infty$.
 The proof of Theorem \ref{th:infstar} is intertwined with that of the following proposition, which shows that $\tilde{T}_M$ does not depend on $M$ asymptotically. 
\begin{prop}\label{prop:inftilde}
  For the Brochette first passage percolation, if $\P(\tau<+\infty)>0$, we have for all $M>a$:
$$
\forall x\in\mathbb{Z}^{d},\text{\;\;\;} \lim_{n\rightarrow+\infty} \frac{\tilde{T}_{M}(0,nx)}{n}=a\lVert x\rVert_1 \text{\;\;} a.s. \text{\;\;and\;in\;}L^{1}.
$$
\end{prop}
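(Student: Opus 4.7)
The plan is to establish matching upper and lower bounds for $\tilde T_M(0,nx)/n$. The lower bound is immediate from the fact that $\tau\ge a$ on every edge: $\tilde T_M(0,nx)\ge a\|\widetilde{nx}_M-\tilde 0_M\|_1$, so it suffices to show that $\|\widetilde{nx}_M-\tilde 0_M\|_1/n\to\|x\|_1$ almost surely. For this, I will verify that $\|\tilde y_M-y\|_1$ has a geometric tail uniform in $y\in\Z^d$: setting $p:=\P(\tau<M)>0$ (positive since $M>a$ and $a$ is the infimum of the support of $\tau$), the lines $(y+je_1)+\Z e_2$ for $j=0,1,2,\dots$ are i.i.d.\ in $\mathcal C_M$ each with probability $p$, so $\|\tilde y_M-y\|_1\le J$ with probability at least $1-(1-p)^{J+1}$. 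Applying this to $y\in\{0,nx\}$ and summing the tails in $n$, Borel--Cantelli yields the a.s.\ convergence $\|\widetilde{nx}_M-nx\|_1/n\to 0$.

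For the upper bound, fix $\epsilon>0$ with $a+\epsilon<M$ and, for clarity, take $x=e_1$; the general case reduces by telescoping as at the end of the proof of Theorem~\ref{th:pap}. Mirroring the construction there, set
\begin{align*}
N&:=\inf\{k\ge 1:\tau_{\delta_k^1}\le a+\epsilon\},\\
V&:=\inf\{j\ge 0:\tau_{\delta_j^2}<M\},\qquad V'_n:=\inf\{j\ge 0:\tau_{\delta_{n-j}^2}<M\},
\end{align*}
all three of which are almost surely finite with geometric tails (finiteness of $N$ uses $a+\epsilon>a$; finiteness of $V,V'_n$ uses $p>0$). I then concatenate five segments to go from $\tilde 0_M$ to $\widetilde{ne_1}_M$: (i) a short path inside $\mathcal C_M$ from $\tilde 0_M$ to $(V,0)$; (ii) the vertical segment $(V,0)\to(V,N)$ along $\delta_V^2$, of cost at most $NM$; (iii) the highway $(V,N)\to(n-V'_n,N)$ along $\delta_N^1$, of cost at most $(a+\epsilon)n$; (iv) the vertical segment $(n-V'_n,N)\to(n-V'_n,0)$ along $\delta_{n-V'_n}^2$, of cost at most $NM$; and (v) a short path inside $\mathcal C_M$ from $(n-V'_n,0)$ to $\widetilde{ne_1}_M$.

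The main obstacle lies in the two short excursions (i) and (v): I must join two points of $\mathcal C_M$ at mutual $L^1$ distance $O(V+\|\tilde 0_M\|_1)$ by a path inside $\mathcal C_M$ whose cost has a geometric tail. I will handle this by exploiting the product structure of $\mathcal C_M$: in a box of appropriate random side around the origin, the orthogonal lines are independent and a positive fraction have $\tau<M$, so a zig-zag of constantly many hops inside $\mathcal C_M$ exists with overwhelming probability, and its passage time is bounded by $M$ times a length with geometric tail. Summing the five contributions gives $\tilde T_M(0,ne_1)\le(a+\epsilon)n+C_n$ with $C_n$ of geometric tail uniform in $n$; Borel--Cantelli then yields $\limsup\tilde T_M(0,ne_1)/n\le a+\epsilon$ almost surely, and uniform integrability of $C_n$ upgrades this to $L^1$ convergence. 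Letting $\epsilon\downarrow 0$ and combining with the lower bound closes the proof.
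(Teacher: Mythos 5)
Your route is genuinely different from the paper's. The paper applies Kingman's subadditive theorem to $(\tilde{T}_M(mx,nx))$, with integrability supplied by the exponential chemical-distance estimate (Lemma \ref{th:chimique} via Proposition \ref{prop:integ}); it obtains an a.s. and $L^1$ limit $X_M$, sandwiches $a\lVert x\rVert_1\leq X_M\leq M\lVert x\rVert_1$ along a sparse subsequence (Lemma \ref{lem:encadrement}), and then shows the limit does not depend on $M$ by comparison with $T^{\ast}$, before letting $M\downarrow a$. You instead prove matching bounds directly, with quantitative geometric tails and Borel--Cantelli, bypassing Kingman, the $T^{\ast}$ comparison and the limit $M\downarrow a$; this buys a self-contained argument with explicit tail bounds (and the $L^1$ statement for free by uniform integrability), at the price of building the connecting paths by hand. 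Your lower bound is fine, and the highway construction for $x=e_1$ is sound; be aware that the excursions (i) and (v) are exactly the point where the paper needs Lemma \ref{th:chimique}, and your sketch (constantly many hops along independent orthogonal lines, each of geometrically-tailed length, total cost at most $M$ times that length) is the right mechanism but is the crux and must be written out with the same care as the rest.

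The one real gap as written is the reduction to general $x$. The telescoping ``as at the end of the proof of Theorem \ref{th:pap}'' does not transfer: there, each increment $T(ny_i,ny_{i+1})$ is only controlled in distribution, which yields convergence in probability of the sum, and the a.s. conclusion is rescued by the a.s. limit already provided by Kingman --- a tool you have discarded. With only convergence in probability of $\tilde{T}_M(ny_i,ny_{i+1})/n$ you would obtain Proposition \ref{prop:inftilde} in probability, not almost surely. The fix is available inside your own framework: your tail estimates are uniform, so for each $i$ one has $\tilde{T}_M(ny_i,ny_{i+1})\leq(a+\epsilon)\,n x_{i+1}+C_n^{(i)}$ with $\P(C_n^{(i)}>t)\leq C e^{-ct}$ uniformly in $n$ (the cheap line and the entry/exit excursions must now be sought near $ny_i$, hence depend on $n$, but the geometric tails are unchanged), and Borel--Cantelli applied to each increment gives the a.s. bound, upgraded to $L^1$ by the same uniform integrability argument. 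Alternatively, run the staircase construction directly for general $x$, with one highway per coordinate direction. Either way, this step must be argued explicitly rather than by citing the telescoping in Theorem \ref{th:pap}.
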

First to understand and then to prove this theorem we now define several objects.  If $M>a$, we define the following objects:
\begin{itemize}
\item $p_{M}:=\mathbb{P}(\tau\leq M).$
\item If $x,y\in\mathbb{Z}^{d}$, $D_{M}(x,y)=\inf\{\mid \gamma\mid: \gamma \text{\;path\;from\;} x \text{\;to\;} y \text{\;in\;} \mathcal{C}_M\} $ is the graph distance in $\mathcal{C}_M$, often called the chemical distance between $x$ and $y$ in $\mathcal{C}_M$ in the context of first-passage percolation.
\item $D(x,y)=\inf\{\mid \gamma\mid: \gamma \text{\;path\;from\;} x \text{\;to\;} y \text{\;in\;} \mathcal{C}_\ast\}$ is the chemical distance between $x$ and $y$ in $\mathcal{C}_\ast$.
\end{itemize}
To prove Proposition \ref{prop:inftilde}, we will once again use Kingman's subadditive theorem. The only assumption of Kingman's Theorem which is not immediate is the integrability of the process $(\tilde{T}_{M}(mx,nx))_{m<n}$. This is the purpose of our first lemma.
\begin{lem}\label{th:chimique}
    Consider Brochette first-passage percolation, then for all $M>a$ there exist $A_1(M),A_2(M)>0$ such that for all $x\in\mathbb{Z}^{d}$ and for all $\ell\geq (2d+1)\lVert x \rVert_1$
$$
\mathbb{P}(0\leftrightarrow x, D_M(0,x)\geq \ell)\leq A_1(M)\exp(-A_2(M) \ell).
$$
\end{lem}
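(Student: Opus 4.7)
The plan is to show that whenever $0 \leftrightarrow x$ in $\mathcal{C}_M$, there exists a path from $0$ to $x$ inside $\mathcal{C}_M$ of length at most $\lVert x \rVert_1 + 2J$ for a random variable $J$ with an exponential tail; this immediately yields the claimed estimate on $D_M(0,x)$. I will call an integer line \emph{good} if its passage time is strictly less than $M$, so that good lines are i.i.d.\ Bernoulli$(q_M)$ with $q_M := \P(\tau < M) > 0$, and $\mathcal{C}_M$ is precisely the union of good lines. The event $\{0 \leftrightarrow x\}$ forces at least one good line through $0$ and one through $x$.

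For each ordered pair of axis-directions $(i,j)\in\{1,\dots,d\}^{2}$, I consider the \emph{staircase path} $\Pi_{i,j}$ from $0$ to $x$: it consists of $d$ axis-parallel legs with intermediate waypoints in $\mathbb{Z}^{d}$, starting in direction $e_{i}$ and ending in direction $e_{j}$, whose total length equals exactly $\lVert x\rVert_{1}$. The $d$ legs lie on $d$ distinct integer lines, hence independent. Conditionally on the starting line $L_{i}^{(0)}$ and the ending line $L_{j}^{(x)}$ being good (which happens under $0\leftrightarrow x$ for at least one pair $(i,j)$), I use $\Pi_{i,j}$ as a backbone. Whenever an intermediate line turns out to be bad, I extend the preceding leg by some integer $t_{k}\geq 0$ to a shifted waypoint where the next required line is good, then close the detour through an auxiliary line parallel to the current leg. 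This costs $2t_{k}$ in length and replaces one bad line by $O(1)$ new lines at distinct positions, independent of the remainder of the construction.

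Since lines at different positions are independent Bernoulli$(q_{M})$, each shift $t_{k}$ is the waiting time of an i.i.d.\ Bernoulli process with parameter bounded below by some fixed power of $q_{M}$, so it has a geometric tail. Arranging the (at most) $d-1$ possible shifts so that they rely on pairwise disjoint families of integer lines, the total excess $J := t_{1}+\dots+t_{d-1}$ is stochastically dominated by a sum of independent geometric variables, which gives $\P(J\geq m)\leq C_{1}e^{-C_{2}m}$. A union bound over the $O(d^{2})$ choices of $(i,j)$, together with the observation that $D_{M}(0,x)\geq \ell$ forces $J\geq (\ell-\lVert x\rVert_{1})/2$, yields the desired bound; the hypothesis $\ell\geq(2d+1)\lVert x\rVert_{1}$ comfortably ensures that $(\ell-\lVert x\rVert_{1})/2$ exceeds the mean of $J$, so that the resulting factor is a genuine exponential in $\ell$.

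The main obstacle is the perfect correlation of passage times along each integer line, which defeats any classical Antal--Pisztora-type argument based on treating edges as independent: one must reason at the level of integer lines. A further subtlety appears in $d\geq 3$, where a good line through $0$ and a good line through $x$ generically do not intersect, so the staircase requires genuine bridging lines; these must be placed on integer lines not already used elsewhere in the construction so as to preserve the independence needed for the geometric tail bounds, in the spirit of the multi-path construction carried out in the proof of Lemma~\ref{lem:unif}.
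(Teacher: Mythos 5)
Your overall strategy is the right one and is, at heart, the same as the paper's: the event $\{0\leftrightarrow x\}$ only involves the $d$ lines through $0$ and the $d$ lines through $x$, it supplies one good line through each point, and one then connects these two lines by a staircase of good transverse lines, using the fact that integer lines at distinct positions are independent Bernoulli$(p_M)$ objects. Your bookkeeping differs from the paper's (you bound $D_M(0,x)\leq \lVert x\rVert_1+2J$ with $J$ a sum of geometric waiting times and then use $\ell-\lVert x\rVert_1\geq \tfrac{2d}{2d+1}\ell$, whereas the paper counts ``good $0$-points'', ``good $x$-points'' and connector lines in a window of length $\lfloor \ell/(2d+1)\rfloor$, getting three Binomial variables and a union bound), but this is only a repackaging; both give the exponential bound for the same reason. (Incidentally, the relevant point at the end is not that $(\ell-\lVert x\rVert_1)/2$ exceeds the mean of $J$ — that mean is $O(1)$ — but that it is bounded below by a constant multiple of $\ell$, uniformly in $x$.)

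There is, however, one step in your construction that does not work as stated for $d\geq 3$: the way you close a detour on an \emph{intermediate} leg. Suppose leg $k$ (direction $e_k$) lies on neither the good line through $0$ nor the one through $x$, and you overshoot it by $t_k$ because the next required line is bad. The return trip of length $t_k$ in direction $e_k$ must later be made along some line parallel to $e_k$, and the identity of that return line is pinned down by the \emph{other} coordinates of the point where the return is performed — it does not change when you increase $t_k$. So ``keep extending the leg until things are good'' does not refresh the auxiliary return line, the candidate positions do not query disjoint families of lines, and the per-leg geometric tail you invoke is not justified; the greedy scheme can stall on a single bad return line. The clean fix — and this is exactly what the paper's ``good $0$-point''/``good $x$-point'' cascades do — is to perform \emph{all} the searching along the two lines you already know to be good: at offset $t$ along the good line through $0$, test whether the entire transverse cascade of $d-1$ lines (in directions $e_2,\dots,e_d$, leading down to the good line through $x$, or to a common connector line when the two lines are not parallel) is good. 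For distinct offsets $t$ these tests involve pairwise disjoint families of lines (they all carry the shifted first coordinate), each succeeds with probability $p_M^{d-1}$, and none of the tested lines passes through $0$ or $x$, so the conditioning on $\{0\leftrightarrow x\}$ is harmless. With this modification your length bound $\lVert x\rVert_1+2J$, with $J$ geometric of parameter $p_M^{d-1}$ (one search from each end in the parallel case), goes through and yields the stated estimate. In $d=2$ your argument is already fine, because there the return leg is carried by the good line through $x$ itself.
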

\begin{proof}[Proof of Lemma \ref{th:chimique}]
 We assume that $x=(x_1,\dots,x_d)$ has all its coordinates positive (the other cases are treated in the same way). Let $\ell\geq(2d+1)\lVert x \rVert_1$. We consider the situation where only one edge emanating from $0$ and from $x$ are present, and they are parallel (this is the worst case; the proof is similar and sometimes shorter in other cases). Moreover to simplify the notations we assume that these edges are $\{0;e_1\}$ and $\{x;x+e_1\}$.
    Let us define $\underline{\ell}:=\lfloor\frac{\ell}{2d+1}\rfloor$. If $x$ and $y$ belong to the same integer line $\delta$, we denote  $x\underset{M}{\rightarrow}y$ if the passage time of $\delta$ is less than $M$, $i.e.$, if $x,y,\delta\in\mathcal{C}_M$.
    Let $1\leq c \leq \underline{\ell}$. We say that $(c,0\dots,0)$ is a good $0$-point if the event
    $$
(c,0,\dots,0)\underset{M}{\rightarrow}(c,\underline{\ell},0,\dots,0)\underset{M}{\rightarrow}(c,\underline{\ell},\underline{\ell},0,\dots,0)\underset{M}{\rightarrow}(c,\underline{\ell},\dots,\underline{\ell})
    $$
    occurs. Similarly, if $1\leq c' \leq \underline{\ell}$ we say that $(c',0,\dots,0)$ is a good $x$-point if the event:
    $$
(c',x_1,\dots,x_d)\underset{M}{\rightarrow}(c',\underline{\ell},x_3,\dots,x_d)\underset{M}{\rightarrow}(c,\underline{\ell},\underline{\ell},x_4,\dots,x_d)\underset{M}{\rightarrow}(c',\underline{\ell},\dots,\underline{\ell})
    $$
    occurs. The number $B$ of good $0$-points and the number $B'$ of good $x$-points follow a binomial distribution with parameters $\underline{\ell}$ and $p_M^{d-1}$. Lastly, if we denote $S$ the set of of integers $1\leq e \leq \underline{\ell}$ such that $(0,\underline{\ell},\dots,\underline{\ell},e)\underset{M}{\rightarrow}(\underline{\ell},\dots,\underline{\ell},e)$ then its cardinal $\tilde{B}$ follows a binomial distribution with parameters $\underline{\ell}$ and $p_M$. Moreover, if $B,B',\tilde{B}>0$, then there exists a path connecting $0$ to $x$ in $\mathcal{C}_M$ with a length at most $(2d+1)\underline{\ell}\leq \ell$.  We have illustrated the construction in the case of dimension $3$ in Figure \ref{Figure 5}.
\begin{figure}
    \centering
    \begin{subfigure}{0.4\textwidth}
        \centerline{\includegraphics[scale=0.25]{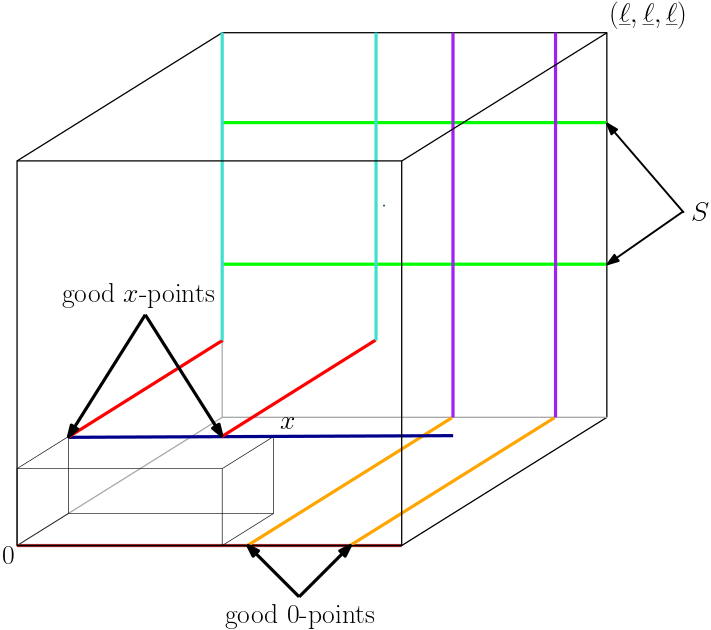}}~
        \caption{Illustration of Lemma \ref{th:chimique} in the case where the edges emanating from $0$ and $x$ are parallel.}
    \end{subfigure}
    \hspace{0.05\textwidth}
    \begin{subfigure}{0.4\textwidth}
        \centerline{\includegraphics[scale=0.25]{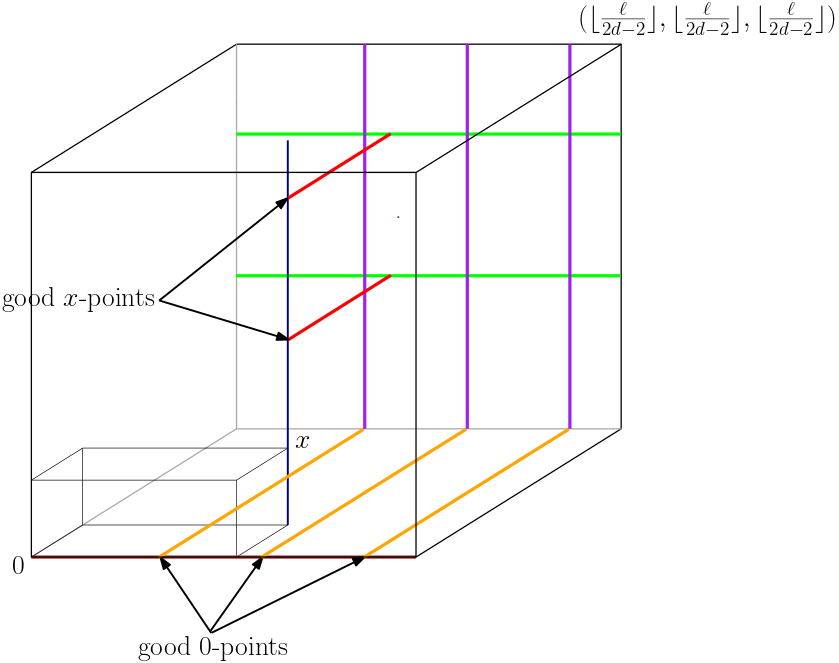}}~
        \caption{Illustration of Lemma \ref{th:chimique} in the case where the edges emanating from $0$ and $x$ are  not parallel.}
        \label{Figure 5 (b)}
    \end{subfigure}
    \caption{}
    \label{Figure 5}
\end{figure}
    We leave it to the readers to convince themselves that the construction is similar when the edges emanating from $0$ and $x$ are not parallel (see Figure \ref{Figure 5 (b)}).  
    Let us show that the non-existence of such a path has an exponentially small probability to occur. We have:
    \begin{align*}
    \P(0\leftrightarrow x,D_M(0,x)\geq\ell)&\leq \P(0\leftrightarrow x,D_M(0,x)\geq (2d+1)\underline{\ell})\\
    &\leq \P(B=0)+\P(B'=0)+\P(\tilde{B}=0)\\&\leq 3\P(\tilde{B}=0)\\&\leq 3\exp(\underline{\ell}\log(1-p_M))\\&
    \leq A_1(M)\exp(-A_2(M)\ell).
    \end{align*}
\end{proof}
\begin{prop}\label{prop:integ}
There exist three positive constants $C_1,C_2$ and $C_3$ which depend on $M$ such that for all $x\in\Z^d$ and for all $\ell\geq C_3\lVert x \rVert_1$
$$
 \P(\tilde{T}_M(0,x)>\ell)\leq C_1e^{-C_2\ell}.
$$
 \end{prop}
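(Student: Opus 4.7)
The plan is to exploit the elementary bound $T(\tilde{0}_M,\tilde{x}_M)\leq M\cdot D_M(\tilde{0}_M,\tilde{x}_M)$, valid because every edge of $\mathcal{C}_M$ has passage time strictly less than $M$. This reduces the problem to two exponential tail estimates: one for the $L^1$ distance from $0$ (resp.\ $x$) to its nearest vertex in $\mathcal{C}_M$, and one for the chemical distance between those two nearest vertices, the latter being exactly Lemma \ref{th:chimique}.

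Define $N_0:=\lVert \tilde{0}_M\rVert_1$ and $N_x:=\lVert \tilde{x}_M-x\rVert_1$; by translation invariance they share a common distribution. The event $\{N_0>k\}$ says that every integer line passing through some vertex of the $L^1$ ball of radius $k$ around $0$ has passage time strictly greater than $M$. I would count these lines --- there are at least $c_d k^{d-1}$ of them, with independent passage times --- to get
\begin{equation*}
\P(N_0>k)\leq (1-p_M)^{c_d k^{d-1}},
\end{equation*}
which decays at least exponentially in $k$ for $d\geq 2$.

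To combine, fix $\alpha:=1/(4M(2d+1))$ and $C_3:=2M(2d+1)$, and decompose
\begin{equation*}
\P(\tilde{T}_M(0,x)>\ell)\leq 2\P(N_0>\alpha\ell)+\sum_{\substack{\lVert y\rVert_1\leq \alpha\ell\\ \lVert z-x\rVert_1\leq \alpha\ell}}\P\bigl(\tilde{0}_M=y,\;\tilde{x}_M=z,\;T(y,z)>\ell\bigr).
\end{equation*}
Each summand on the right is bounded above by $\P(y\leftrightarrow z\text{ in }\mathcal{C}_M,\;D_M(y,z)>\ell/M)$, since the event $\tilde{0}_M=y$ already forces $y\in \mathcal{C}_M$, $\mathcal{C}_M$ is connected, and the bound $T\leq M\cdot D_M$ holds on $\mathcal{C}_M$. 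The choice of $\alpha$ and $C_3$ ensures that, as soon as $\ell\geq C_3\lVert x\rVert_1$ and $(y,z)$ is admissible, $\lVert y-z\rVert_1\leq 2\alpha\ell+\lVert x\rVert_1$, hence $(2d+1)\lVert y-z\rVert_1\leq \ell/M$; Lemma \ref{th:chimique} then yields $\leq A_1(M)\exp(-A_2(M)\ell/M)$ per term. The number of admissible pairs $(y,z)$ is polynomial in $\ell$ (of degree $2d$), so this factor is absorbed by the exponential and the full sum is $\leq C_1 e^{-C_2\ell}$ for suitable constants; the $\P(N_0>\alpha\ell)$ term is even smaller.

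The only delicate point is that $\tilde{0}_M$, $\tilde{x}_M$ and the chemical distance $D_M$ are all functions of the same random environment, so a naive conditioning argument would be awkward; the deterministic union bound over candidate pairs $(y,z)$ cleanly sidesteps this, at the cost of a polynomial factor absorbed by the exponential in $\ell$. The argument as written is for $d\geq 2$; the case $d=1$ is degenerate (conditionally on $\mathcal{C}_M$ being non-empty, $\mathcal{C}_M$ is all of $\Z$ and the statement is immediate).
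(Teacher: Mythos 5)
Your proposal is correct and follows essentially the same route as the paper: the same decomposition into the events that $\tilde{0}_M$ (resp.\ $\tilde{x}_M$) is far from $0$ (resp.\ $x$) plus a union bound over admissible pairs, the same reduction $T\leq M\,D_M$ feeding into Lemma \ref{th:chimique}, and the same choice of constants (your $\alpha\ell$ is the paper's $\epsilon\ell/2$ with $\epsilon=\tfrac{1}{2(2d+1)M}$, $C_3=2M(2d+1)$). The only cosmetic difference is your sharper line-counting bound $(1-p_M)^{c_d k^{d-1}}$ for $\P(N_0>k)$, where the paper uses the cruder but equally sufficient $(1-p_M)^{r}$.
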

 \begin{proof}[Proof of Proposition \ref{prop:integ} ]
 \begin{equation*}
\begin{split}
\P(\tilde{T}_M(0,x)>\ell)\leq& \P(\tilde{0}\notin B_{1}(0,\frac{\epsilon \ell}{2}))+\P(\tilde{x}\notin B_{1}(x,\frac{\epsilon \ell}{2}))\\
+&\sum_{\substack{a\in B_{1}(0,\frac{\epsilon \ell}{2})\cap \Z^{d}\\b\in B_{1}(x,\frac{\epsilon \ell}{2})\cap\Z^{d}}}\P(\tilde{0}=a,\tilde{x}=b,D_{M}(a,b)\geq\frac{\ell}{M})\\
                            \leq&\P(\mathcal{C}_{M}\cap B_{1}(0,\frac{\epsilon \ell}{2})=\emptyset)+\P(\mathcal{C}_{M}\cap B_{1}(x,\frac{\epsilon \ell}{2})=\emptyset)\\
                            +&\sum_{\substack{a\in B_{1}(0,\frac{\epsilon \ell}{2})\cap \Z^{d}\\b\in B_{1}(x,\frac{\epsilon \ell}{2})\cap\Z^{d}}}\P(a \overset{M}{\longleftrightarrow}b,D_{M}(a,b)\geq\frac{\ell}{M}).
\end{split}
\end{equation*}
To bound the first two terms, we use the following rough estimate:
$$
\P(\mathcal{C}_{M}\cap B_{1}(0,r)=\emptyset)\leq (1-p_M)^r.
$$
Moreover, if $a\in B_{1}(0,\frac{\epsilon \ell}{2})\cap \Z^{d}$ and $b\in B_{1}(x,\frac{\epsilon \ell}{2})\cap\Z^{d}$ then $\lVert a-b \rVert_1\leq \lVert x \rVert_1+\epsilon \ell.$ Thus, for $\epsilon=\frac{1}{2(2d+1)M}$ and $\ell\geq 2M(2d+1)\lVert x \rVert_1$ we can use Lemma \ref{th:chimique} and we then have:
$$
\P(\tilde{T}_M(0,x)>\ell)\leq 2(1-p_M)^{\frac{\epsilon\ell}{2}} +\mid B_{1}(0,\frac{\epsilon \ell}{2})\cap\Z^{d} \mid^2 A_1\exp(-A_2\ell).
$$
Hence the result.
\end{proof}
\begin{proof}[Proof of Proposition \ref{prop:inftilde}] 
We first fix $M>a$.
All the assumptions of Kingman's Theorem are satisfied, and $\tilde{T}_{M}(0,x)$ is integrable thanks to Proposition \ref{prop:integ}. Then we know that:
\begin{equation}\label{convergenceinf}
\frac{\tilde{T}_{M}(0,nx)}{n}\rightarrow_{n\rightarrow+\infty}X_{M} \text{ a.s. and in }L^1,
\end{equation}
with $X_{M}$ a random variable. However, as in the case where passage times are finite almost surely, the lack of ergodicity implies that, for now, we do not know that the limiting random variable is almost surely equal to a constant. To see that we will study the convergence of a subsequence of $(\frac{\tilde{T}_{M}(0,nx)}{n})_{n\geq1}$ in the following lemma. 
\begin{lem}\label{lem:encadrement}
    For all $\epsilon>0$, $x\in \Z^d$ and $M>a$ , we have:
    $$
a\lVert x \lVert_1\leq X_{M}\leq M\lVert x \lVert_1 \; a.s..
    $$
 \end{lem}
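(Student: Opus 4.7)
My plan is to prove the two inequalities separately: the lower bound by an edge-counting argument and the upper bound by an explicit construction of short paths inside $\mathcal{C}_M$. A preliminary fact used in both halves is that $\lVert \widetilde{nx}_M - nx \rVert_1 / n \to 0$ almost surely, which I would establish by Borel-Cantelli: one has $\{\lVert \widetilde{nx}_M - nx \rVert_1 > \epsilon n\} \subset \{\mathcal{C}_M \cap B_1(nx, \epsilon n) = \emptyset\}$, and the latter event has probability at most $(1-p_M)^{c \epsilon n}$ since there are at least $c \epsilon n$ distinct axis-parallel integer lines crossing $B_1(nx, \epsilon n)$, each lying in $\mathcal{C}_M$ independently with probability $p_M$. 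These probabilities are summable in $n$.

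For the lower bound, any path from $\tilde{0}_M$ to $\widetilde{nx}_M$ contains at least $\lVert \widetilde{nx}_M - \tilde{0}_M \rVert_1$ edges, each contributing at least $a$ to the passage time. The triangle inequality $\lVert \widetilde{nx}_M - \tilde{0}_M \rVert_1 \geq \lVert nx \rVert_1 - \lVert \tilde{0}_M \rVert_1 - \lVert \widetilde{nx}_M - nx \rVert_1$, combined with the preliminary fact and the observation that $\lVert \tilde{0}_M \rVert_1$ is a fixed random variable (hence $o(n)$), yields $X_M \geq a \lVert x \rVert_1$ almost surely.

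For the upper bound, the strategy is to construct, for each $n$, a path $\gamma_n$ lying entirely in $\mathcal{C}_M$ from $\tilde{0}_M$ to $\widetilde{nx}_M$ with length $\lVert nx \rVert_1 + Z_n$ where $Z_n / n \to 0$ almost surely. Since every edge of $\mathcal{C}_M$ has passage time strictly less than $M$, this forces $\tilde{T}_M(0, nx) \leq M(\lVert nx \rVert_1 + Z_n)$, and hence $X_M \leq M \lVert x \rVert_1$ in the limit. By subadditivity one reduces to the axis case $x = k e_i$: writing $y_i = \sum_{j \leq i} x_j e_j$ gives $\tilde{T}_M(0, nx) \leq \sum_i \tilde{T}_M(n y_i, n y_{i+1})$ and each summand has the same distribution as $\tilde{T}_M(0, n x_{i+1} e_{i+1})$. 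For $x = e_1$ in dimension $2$, I would build $\gamma_n$ in three stages: from $\tilde{0}_M$, travel along the line of $\mathcal{C}_M$ through $\tilde{0}_M$ until reaching the nearest horizontal line also in $\mathcal{C}_M$ (detour of length $H_0$); then traverse this horizontal highway for roughly $n$ edges; then execute a symmetric detour of length $H_1$ at the other end to reach $\widetilde{ne_1}_M$.

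The main obstacle is bounding $H_0, H_1$ by random variables independent of $n$. This is where the Brochette structure matters, especially in dimensions $d \geq 3$: to reach a line parallel to $e_1$ in $\mathcal{C}_M$, one may need a cascade of switches through lines of several different orientations. However, since each axis direction independently contains lines of $\mathcal{C}_M$ with positive density $p_M$, each detour length is stochastically dominated by a geometric random variable with a parameter depending only on $p_M$, and is therefore almost surely finite and $O(1)$ in $n$. This gives $Z_n = O(1)$ a.s., which is amply sufficient to complete the upper bound.
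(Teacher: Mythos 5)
Your proposal is correct in substance but follows a genuinely different route from the paper. The paper does not try to control $\tilde{T}_M(0,nx)$ for every $n$: it uses the fact that the Kingman limit $X_M$ already exists, so it suffices to bound $\tilde{T}_M(0,nx)/n$ along a random infinite subsequence $J$ of times at which $nx$ itself lies in $\mathcal{C}_M$ and the chemical distance satisfies $D_M(\tilde{0},nx)\leq \lVert \tilde{0}-nx\rVert_1+2\sqrt{n}$ (the detour $K(n)$ being geometric with parameter $p_M^{d-1}$, Borel--Cantelli gives $J$ infinite a.s.); restricting to $J$ completely avoids having to locate $\widetilde{nx}_M$, and the argument treats a general $x$ in one stroke. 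You instead work with all $n$, which forces you to (i) prove the endpoint-regularization estimate $\lVert \widetilde{nx}_M-nx\rVert_1=o(n)$ a.s. (your Borel--Cantelli argument for this is fine and mirrors the rough estimate in the proof of Proposition \ref{prop:integ}), and (ii) build an explicit highway path for each $n$, after reducing to axis directions by subadditivity. The reduction step deserves one more line: the translated pieces $\tilde{T}_M(ny_i,ny_{i+1})$ only inherit the axis-case bound in distribution, hence in probability, but this suffices because $X_M$ is already an a.s.\ limit --- exactly the recombination trick used at the end of the proof of Theorem \ref{th:pap}. What your route buys is a statement valid for every $n$ and a cleaner lower bound; what the paper's route buys is brevity and no need to track $\widetilde{nx}_M$.

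Two points need tightening. First, your closing claim ``$Z_n=O(1)$ a.s.'' does not follow from (and is not implied by) the geometric domination: the far-end detour $H_1=H_1(n)$ is a fresh geometric-tailed variable for each $n$, and its supremum over $n$ is a.s.\ infinite; what does follow, via $\P(Z_n>\epsilon n)\leq Ce^{-c\epsilon n}$ and Borel--Cantelli, is $Z_n/n\to0$ a.s., which is what you actually need and stated earlier. Second, the geometric domination of the detours requires a word about conditioning: the events $\{\tilde{0}_M=v\}$ and $\{\widetilde{ne_1}_M=w\}$ constrain the lines passing within $L^1$ distance $\lVert v\rVert_1$ (resp.\ $\lVert w-ne_1\rVert_1$) of the corresponding point, so the switching lines you use should be taken beyond that constrained radius; the detour is then dominated by (that radius, which is a fixed a.s.\ finite variable at the origin and has geometric tails uniformly in $n$ at the far end) plus an honest geometric with parameter depending only on $p_M$ (for $d\geq3$, $p_M^{d-1}$, as in the paper's chain of $\underset{M}{\rightarrow}$ relations). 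With these two repairs your argument is complete.
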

 \begin{proof}[Proof of Lemma \ref{lem:encadrement}]
 Let $x=(x_1,\dots,x_d)\in\Z^d$. By symmetry we can assume that $x_1,\dots,x_d\geq0$.  
 By definition, there exists at least one integer line with a passage time less than $M$ passing through $\tilde{0}.$ Without loss of generality, assume that the line $\tilde{0}+\Z e_1$ has a passage time less than $M.$ Let $J=\{n\in\N, nx\in\mathcal{C}_M,\;D_M(\tilde{0},nx)\leq \lVert \tilde{0}-nx\rVert_1+2\sqrt{n}\}.$ Let us show that $J$ is infinite almost surely. If $nx\in\mathcal{C}_M$ is such that $nx+\Z e_1$ has a passage time less than $M$ we consider $K(n)$ to be the abscissa of the point in $x_1e_1+\N e_1$ closest to $x_1$ such that:
 $$
\tilde{0}+ K(n) e_1 \underset{M}{\rightarrow} (\tilde{0}+K(n),x_2,\tilde{0}_3,\dots,\tilde{0}_d)
\underset{M}{\rightarrow} \dots \underset{M}{\rightarrow} (\tilde{0}+K(n),x_2,\dots,x_d)\underset{M}{\rightarrow} (x_1,x_2,\dots,x_d)=x.
 $$ We then have that $D_M(\tilde{0},nx)\leq \lVert \tilde{0}-nx\rVert_1+2K(n)$. $K(n)$ follows a geometric distribution with parameter $p:=p_M^{d-1}$, so we have:
 $$
\P(K(n)>\sqrt{n})=(1-p)^{\lfloor \sqrt{n}\rfloor}.
 $$
Consequently by the Borel-Cantelli lemma $K(n)\leq\sqrt{n}$ infinitely often, almost surely. It follows that $J$ is infinite almost surely. Thus we have:
 \begin{align*}
     \lim_{n\to+\infty}\frac{\tilde{T}_M(0,nx)}{n}&=\lim_{\substack{n\to+\infty\\ n\in J}}\frac{\tilde{T}_M(0,nx)}{n}\\
        &\leq M \liminf_{\substack{n\to+\infty\\ n\in J}}\frac{D_M(\tilde{0},nx)}{n}\\
        &\leq M \liminf_{\substack{n\to+\infty\\ n\in J}}(\frac{\lVert \tilde{0}-0\rVert_1}{n}+\frac{\lVert 0-nx\rVert_1}{n})\\
        &\leq M\lVert x \rVert_1.
 \end{align*}
 Similarly, we have:
 \begin{align*}
     \lim_{n\to+\infty}\frac{\tilde{T}_M(0,nx)}{n}&\geq \lim_{\substack{n\to+\infty\\ n\in J}}a\frac{D_M(\tilde{0},nx)}{n}\\
     &\geq a \lim_{\substack{n\to+\infty\\ n\in J}}\frac{\lVert \tilde{0}-nx \rVert_1}{n}=a \lVert x \rVert_1.
 \end{align*}
 Thus, according to \eqref{convergenceinf}, if $M>a$ we have:
 \begin{equation}\label{limite}
a\lVert x \rVert_1\leq X_{M}\leq M\lVert x \rVert_1 a.s..
\end{equation}
\end{proof}
So, we need to show that, in fact, $X_{M}$ does not depend on $M$. To do this, let us fix $x\in\Z^d$. Recall that $T^{\ast}(x,y)=T(x^{\ast},y^{\ast})$ where $x^{\ast}$ is the closest vertex in $L^{1}$ norm to $x$ in $\mathcal{C}_{\ast}$ with an arbitrary rule to break ties. According to the triangular inequality on $T$, 
$$
\mid T^{\ast}(0,nx)-\tilde{T}_{M}(0,nx)\mid\leq T(0^\ast,\tilde{0})+T(nx^\ast,\tilde{nx}).
$$
Since $T(0^\ast,\tilde{0})$ and $T(nx^\ast,\tilde{nx})$ have same distribution :
$$
\P(\mid T^{\ast}(0,nx)-\tilde{T}_{M}(0,nx)\mid\geq \epsilon n)\leq \P(T(0^\ast,\tilde{0})+T(nx^\ast,\tilde{nx})\geq \epsilon n)\leq 2\P(T(0^\ast,\tilde{0})\geq \frac{\epsilon n}{2}).
$$
Since $0^\ast$ and $\tilde{0}$ are both in $\mathcal{C}_\ast$, $T(0^\ast,\tilde{0})<+\infty$ and the above inequality implies:
\begin{equation}\label{unilim}
\forall x\in \Z^d, \; \frac{T^{\ast}(0,nx)}{n}-\frac{\tilde{T}_M(0,nx)}{n}\overset{\P}{\rightarrow}0.
\end{equation}
We can conclude that $\frac{T^{\ast}(0,nx)}{n}$ and $\frac{\tilde{T}_M(0,nx)}{n}$ have the same limit which can not depend on $M$ since $\frac{T^{\ast}(0,nx)}{n}$ does not depend on $M$.
By letting $M$ tend to $a$ in \eqref{limite}, we deduce that: 
\begin{equation}\label{convas}
\frac{\tilde{T}_M (0,nx)}{n} \rightarrow_{n \rightarrow + \infty} a \lVert x \rVert_1 \text{\;\;\;almost \; surely},
\end{equation}
which concludes the proof of Theorem \ref{prop:inftilde}.
\end{proof}
\begin{proof}[Proof of Theorem \ref{th:infstar}]
Combining \eqref{unilim} and \eqref{convas} we obtain:
$$
\frac{T^{\ast}(0,nx)}{n} \overset{\P}{\rightarrow}_{n \rightarrow + \infty} a \lVert x \rVert_1.
$$
\end{proof}

\appendix
\section{Appendix}
The asymptotic of $M_n$ is studied in numerous references, but we have not found any reference regarding the study of $\E[M_n]$.
For the sake of self-containedness
we provide an estimate of $\E[M_n]$.
\begin{lem}\label{lem:tpsborne}
 Assume that  the passage times are bounded. Let $(\tau_i)_{i\in\N}$ be i.i.d. random variables with cumulative distribution function $F$ such that:
$$
F(t)\sim_{t\rightarrow 0^{+}}Ct^{\beta},\text{\;\;\;for\;a\;constant\;} C>0 \text{ and }\beta>0.
$$
Then, if $M_n=\min(\tau_1,\dots,\tau_n)$ we have:
$$
\mathbb{E}[M_{n}]\sim_{n\rightarrow +\infty}\frac{1}{n^{\frac{1}{\beta}}}\frac{1}{\beta C^{\frac{1}{\beta}}}\Gamma(\frac{1}{\beta}).
$$
\end{lem}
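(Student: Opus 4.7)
The plan is to start from the standard tail formula $\E[M_n]=\int_0^\infty \P(M_n>t)\,dt=\int_0^\infty (1-F(t))^n\,dt$, which is valid since $M_n\geq 0$, and then perform the change of variables $t=s/(nC)^{1/\beta}$ that captures the natural scale of the minimum. Writing $T_n:=(nC)^{1/\beta}\sup\mathrm{supp}(\tau)$, this yields
\[
\E[M_n]=\frac{1}{(nC)^{1/\beta}}\int_0^{T_n}\bigl(1-F\bigl(s/(nC)^{1/\beta}\bigr)\bigr)^n\,ds.
\]
So it suffices to show that the integral on the right converges to $\int_0^\infty e^{-s^\beta}\,ds$, because a direct substitution $u=s^\beta$ evaluates this to $\Gamma(1/\beta)/\beta$, which gives exactly the announced constant once combined with the prefactor $(nC)^{-1/\beta}$.

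For the pointwise convergence of the integrand, fix $s>0$. Since $F(t)\sim Ct^\beta$ as $t\to 0^+$, one has $nF(s/(nC)^{1/\beta})\to s^\beta$, hence $(1-F(s/(nC)^{1/\beta}))^n\to e^{-s^\beta}$ as $n\to\infty$. The extension of the integral up to $T_n$ presents no issue because $T_n\to+\infty$.

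The main technical step is to produce an integrable domination so as to apply dominated convergence. From $F(t)/t^\beta\to C$ as $t\to 0^+$ we can fix $\delta>0$ with $F(t)\geq (C/2)\,t^\beta$ on $[0,\delta]$. On the range $0\leq s\leq \delta(nC)^{1/\beta}$, this gives
\[
\bigl(1-F(s/(nC)^{1/\beta})\bigr)^n\leq \Bigl(1-\frac{s^\beta}{2n}\Bigr)^n\leq e^{-s^\beta/2},
\]
which is integrable on $[0,\infty)$. On the complementary range $\delta(nC)^{1/\beta}\leq s\leq T_n$, the integrand is bounded by $(1-F(\delta))^n$, and since $F(\delta)>0$ by the assumption $\beta>0$ and $C>0$, the contribution of this tail to the integral is at most $T_n(1-F(\delta))^n$, which tends to $0$ as $n\to\infty$.

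Combining the pointwise limit, the bound $e^{-s^\beta/2}$ on the bulk, and the exponentially vanishing tail contribution, dominated convergence gives
\[
\int_0^{T_n}\bigl(1-F(s/(nC)^{1/\beta})\bigr)^n\,ds\xrightarrow[n\to\infty]{}\int_0^{\infty} e^{-s^\beta}\,ds=\frac{1}{\beta}\,\Gamma\!\left(\frac{1}{\beta}\right),
\]
and plugging this back into the scaled expression for $\E[M_n]$ yields the claimed equivalent. The only delicate point is handling the \emph{upper} part of the integral, i.e.\ checking that the global shape of $F$ away from $0$ does not spoil the contribution; the boundedness of the support of $\tau$ makes this essentially automatic.
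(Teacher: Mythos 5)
Your argument is correct and follows essentially the same route as the paper: write $\E[M_n]$ via the tail formula, rescale by the natural $n^{-1/\beta}$ factor, identify the pointwise limit $e^{-s^\beta}$, and conclude by dominated convergence, the only cosmetic difference being that you split the domination into a bulk bound $e^{-s^\beta/2}$ plus an exponentially small tail, whereas the paper uses a single global bound $F(u)\geq C'u^\beta$ on $[0,K]$ coming from boundedness of the support. Both treatments of the domination are valid, so nothing needs to change.
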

\begin{proof}[Proof of Lemma \ref{lem:tpsborne}] 
Let $K>0$  be such that $\tau\leq K $ almost surely.\\
Let us denote $X_{n}:=M_{n}n^\frac{1}{\beta}$. We have:\\
$$
    \mathbb{E}[X_{n}]=\int_{0}^{+\infty}\mathbb{P}(X_{n}\geq t)dt
                     =\int_{0}^{Kn^{\frac{1}{\beta}}}(1-F(\frac{t}{n^{\frac{1}{\beta}}}))^{n}dt.
$$
Now, for a fixed $t$, we have:
$$
    (1-F(\frac{t}{n^{\frac{1}{\beta}}}))^{n}=(1-C\frac{t^{\beta}}{n}+ o(\frac{1}{n}))^{n}
                    \longrightarrow_{n\rightarrow+\infty}\exp(-Ct^{\beta}).
$$
Let $C'>0$ such that  $F(u)\geq C'u^\beta$ for $u\in[0;K]$. Using that $(1-\frac{x}{n})^n\leq e^{-x}$, we have that for all $t\geq0$, $(1-F(\frac{t}{n^{\frac{1}{\beta}}}))^n\leq \exp(-C't^\beta)$.
According to the Dominated Convergence Theorem and with the changes of variables $u=Ct^{\beta}$ we have:
$$
    \mathbb{E}[X_{n}]\rightarrow_{n\rightarrow+\infty}\int_{0}^{+\infty}\exp(-Ct^{\beta})dt=\Gamma(\frac{1}{\beta})\times \frac{1}{\beta C^{\frac{1}{\beta}}}.
$$
So we finally have:
$$
\mathbb{E}[M_{n}]\sim_{n\rightarrow+\infty}\frac{1}{n^{\frac{1}{\beta}}}\frac{1}{\beta C^{\frac{1}{\beta}}}\Gamma(\frac{1}{\beta}).
$$
 \end{proof}
\section*{Acknowledgments}
I would like to warmly thank my two PhD advisors, Anne-Laure Basdevant and Lucas Gerin, first for suggesting this topic and for their support throughout the writing of this article. This work was partially supported by ANR LOUCCOUM.

\bibliographystyle{alpha}
\bibliography{bibliographie}
\end{document}